\documentclass[12pt,final]{amsart} 

\usepackage{amsmath , amsthm , amsfonts , ifpdf}
\usepackage{color}
\usepackage{graphicx}
\usepackage{multicol}
\usepackage{epstopdf}
\usepackage{verbatim}

\usepackage{tikz}

\usepackage{amsmath}
\usepackage{amssymb}

\usetikzlibrary{arrows , shapes , trees , backgrounds}
\usetikzlibrary{intersections}


\theoremstyle{plain}
\newtheorem{theorem}{Theorem}[section]
\newtheorem*{theorem*}{Theorem}

\newtheorem{pro}[theorem]{Proposition}
\newtheorem{Def}[theorem]{Definition}
\newtheorem{lem}[theorem]{Lemma}

\newtheorem{cor}[theorem]{Corollary}

\theoremstyle{definition}
\newtheorem*{Def*}{Definition}

\newtheorem{Rem}[theorem]{Remark}

\numberwithin{equation}{section}

\newcommand{\bpo}{\begin{pro}}
\newcommand{\epo}{\end{pro}}
\newcommand{\be}{\begin{equation}}
\newcommand{\ene}{\end{equation}}
\newcommand{\br}{\begin{Rem}}
\newcommand{\er}{\end{Rem}}
\newcommand{\bl}{\begin{lem}}
\newcommand{\el}{\end{lem}}
\newcommand{\bd}{\begin{Def}}
\newcommand{\ed}{\end{Def}}
\newcommand{\ben}{\begin{enumerate}}
\newcommand{\een}{\end{enumerate}}
\newcommand{\bp}{\begin{proof}}
\newcommand{\ep}{\end{proof}}
\newcommand{\beq}{\begin{equation*}}
\newcommand{\eeq}{\end{equation*}}
\newcommand{\bear}{\begin{eqnarray*}}
\newcommand{\eear}{\end{eqnarray*}}
\newcommand{\bt}{\begin{theorem}}
\newcommand{\et}{\end{theorem}}
\newcommand{\bst}{\begin{split}}
\newcommand{\est}{\end{split}}

\newcommand{\bal}{\begin{aligned}}
\newcommand{\eal}{\end{aligned}}

\renewcommand{\P}{\partial}
\newcommand{\F}[2]{\frac{#1}{#2}}
\newcommand{\la}{\langle}
\newcommand{\ra}{\rangle}

\newcommand{\R}{\mathbb{R}}

\newcommand{\RM}{Riemannian manifold}

\newcommand{\wrt}{with respect to}
\newcommand{\Sc}{\varepsilon}
\renewcommand{\H}{\mathbb{H}}

\newcommand{\PLH}{{\mkern-1mu\times\mkern-1mu}}

\newcommand{\AF}{\mathfrak{F}}

\newcommand{\Ca}{Caccioppoli}
\newcommand{\tu}{\tilde{u}}

\newcommand{\bM}{\mathbb{M}}
\newcommand{\IMC}{integer multiplicity current}
\newcommand{\dk}{D_{k,\alpha}}
\newcommand{\bdk}{\bar{D}_{k,\alpha}}
\newcommand{\tka}{T_{k,\alpha}}
\newcommand{\tps}{\tilde{\phi}}
\newcommand{\ts}{\tilde{\sigma}}
\newcommand{\fG}{\mathcal{G}}


\def\XXint#1#2#3{{\setbox0=\hbox{$#1{#2#3}{\int}$}
		\vcenter{\hbox{$#2#3$}}\kern-.5\wd0}}

\makeatletter
\def\@citestyle{\m@th\upshape\mdseries}
\def\citeform#1{{\bfseries#1}}
\def\@cite#1#2{{%
		\@citestyle[\citeform{#1}\if@tempswa, #2\fi]}}
\@ifundefined{cite }{%
	\expandafter\let\csname cite \endcsname\cite
	\edef\cite{\@nx\protect\@xp\@nx\csname cite \endcsname}%
}{}
\makeatother

\begin{document}

\title[Area minimizng problems in a cone]{The area  minimizing problem in conformal cones} 
\author{Qiang Gao, Hengyu Zhou}
\address[Q.~G]{Department of Mathematics, Sun Yat-Sen University, 510275, Guangzhou, P. R. China}
\email{gaoqiangks@outlook.com}

\address[H. ~Z]{ College of Mathematics and Statistics, Chongqing University, Huxi Campus, Chongqing, 401331, P. R. China}
\address{Chongqing Key Laboratory of Analytic Mathematics and Applications, Chongqing University, Huxi Campus, Chongqing, 401331, P. R. China}
\email{zhouhyu@cqu.edu.cn}
\subjclass[2010]{Primary 49Q20: Secondary 53A10,  35A01, 35J25}
\begin{abstract} In this paper we study the area minimizing problem in some kinds of conformal cones.  This concept is a generalization of the cones in Eulcidean spaces and the cylinders in product manifolds. We define a non-closed-minimal (NCM) condition for bounded domains. Under this assumption and other necessary conditions we establish the existence of bounded minimal graphs in mean convex conformal cones. Moreover those minimal graphs are the solutions to corresponding area minizing problems. We can solve the area minimizing problem in non-mean convex translating conformal cones if these cones are contained in a larger mean convex conformal cones with the NCM assumption.  We give examples to illustrate that this assumption can not be removed for our main results.
	\end{abstract}
\date{\today}
\maketitle
\section{Introduction}
In this paper we study the area minimizing problem in conformal cones.  A conformal cone is defined as follows. 
\begin{Def}\label{def:conformal:cone}  Suppose $\Omega$ is an open bounded Riemanian manifold with $C^2$ boundary and metric $\sigma$. Let $I$ be an open interval $(-\infty, A)$ where $A$ is a constant or $+\infty$. Let $\phi(r)$ be a $C^2$ positive function on $I$.  We call  $ (\Omega\PLH I,  \phi^2(r)(\sigma+dr^2)) 
	$ is a conformal cone, written as $Q_\phi$. \\
	\indent If $\phi(r)=e^{\F{\alpha}{n} r}$ for a constant $\alpha \in \R$, we call such cone as a translating conformal cone.  Let $C$ be an adjective. If $\Omega$ is $C$ (has the $C$ property), we call such cone as a $C$ conformal cone (with the $C$ property). 
\end{Def} 

\indent The above definition is a generalization of Euclidean cones $(\phi(r)=e^{2r},\Omega\subset S^{n+1})$, cylinders of product manifolds $(\phi(r)\equiv1)$ and a large class of warped product manifolds (remark \ref{condition:Remark:one}). Note that a conformal cone may be incomplete at the negative infinity.  \\
 \indent Throughout this paper let $\bar{Q}_\phi$ denote the set $\bar{\Omega}\PLH I$ equipped with the product topology. Define $\fG$ as the set of all {\IMC} currents with compact support in $\bar{Q}_\phi$, i.e. if $T\in \fG$, then its support is contained in $ \Omega \PLH [a,b]$ for some $[a,b]\subset (-\infty, A)$.  \\
 \indent The area minimizing problem in $Q_\phi$  is to find an {\IMC} $T_0\in \fG$ to realize the minimum of 
 \be \label{area-minimizing}
 \min \{ \bM(T): T\in \fG, \P T=\Gamma\}
 \ene 
 where $\psi(x)$ is a $C^1$ function on $\P\Omega$, $\Gamma=\{(x,\psi(x))\in \P\Omega\PLH I\}$ and $\bM(T)$  denotes the mass of $T$. See section 3 for related definitions.\\
 \indent Our main motivation is from the following three theorems. The first theorem due to Rado \cite{Rado32} and Tausch \cite{Tau80} states that if $\Gamma$ is a $C^2$ graph in the boundary of any convex Eucldiean cone then $\Gamma$ bounds a unique area minizing disk as a graph over $\R^n$. The second theorem due to Anderson \cite{And82} says that if $\Omega$ is a $C^2$ mean convex domain in the infinity boundary $S^{n}$ of Hyperbolic space $\H^{n+1}$, there is a local area minimizing minimal graph over $\Omega$ in $\H^{n+1}$ with infinity prescribed boundary $\P\Omega$. A third theorem is from Lin's thesis, section 4.1 in \cite{Lin85}. The author established the existence of area minimizing currents with compact support in a cylinder over bounded domains in $\R^{n}$ with a $C^1$ graphical boundary via bounded variation (BV) function theory.  Based on the above three results it is natural to ask how to solve the area minimizing problem \eqref{area-minimizing} in conformal cones.  \\
	\indent Similar kinds of generalized area minimizing problems in Euclidean cones to explore the existence of surfaces with prescribed mean curvature are considered in \cite{BH16},\cite{Fu03},\cite{LP03}, \cite{LP14}, \cite{Sau16},\cite{Sch04} and \cite{Sch05}, \cite{Bourni11} etc. For existence of area minimizing cones and some area minimizing problems, we refer to Lawlor \cite{Law91}, Morgan \cite{Mor02}, Zhang \cite{Zhang18}, Ding-Jost-Xin \cite{DJX16}, Ding \cite{Ding19} and refereneces therein. \\
\indent	A main difficulty to solve \eqref{area-minimizing} in conformal cones is how to describe minimal graphs in $Q_\phi$ with fixed boundaries for general $\phi(r)$. This is equivalent to solve the following Dirichlet problem to mean curvature equation: 
\be\label{mean:curvation} -div(\F{Du}{\omega})+n\F{\phi'(u(x))}{\phi(u(x))}\F{1}{\omega}=0 \quad \text{on}  \quad \Omega, 
\ene 
with $\omega=\sqrt{1+|Du|^2}$ and $u(x)=\psi(x)$ for $\psi(x)\in C(\P\Omega)$ (Corollary \ref{cor:min}). Here $div$ is the divergence of $\Omega$ and $Du$ is the gradient of $u$. A recent work of  Casteras-Heinonen-Holopainen \cite{CHH19} studied a similar form \eqref{mean:curvation} with a lower bound upon the Ricci curvature of $\Omega$ depending on $\phi(r)$.\\
\indent To overcome this difficulty we propose a topological condition for bounded domains different with \cite{CHH19} as follows. 
\begin{Def} \label{Def:NCM}Suppose $\Omega$ is a $n$-dimensional bounded Riemannian manifold with $C^2$ boundary. We say that $\Omega$ has the non-closed-minimal (NCM) property if it holds that \begin{enumerate}
		\item if $n\leq 7$, no closed embedded minimal hypersurface exists in $\bar{\Omega}$(the closure of $\Omega$);
		\item if $n >7$, no closed embedded minimal hypersurface with a closed singular set $S$ with $H^{k}(S)=0$ for any real number $k>n-7$ exists in $\bar{\Omega}$ where $H^k$ denotes the $k$-dimensional hausdorff measure on $\Omega$;
	\end{enumerate} 
\end{Def}
 All bounded $C^2$ domains in Euclidean spaces, Hyperbolic spaces and hemisphere (not itself) have the NCM property by the maximum principle from Ilmanen \cite{Ilm96} (remark \ref{remark:example}). \\
\indent The main result  from \cite{Zhou19} we will use in this paper is stated as follows. 
\bt [Theorem 1.7, \cite{Zhou19}]\label{tsh:thm} Suppose $\Omega$ is a $C^2$ mean convex domain with the NCM property. Then the Dirichlet problem of translating mean curvature equation
 \be\label{eq:teq}
\left\{\begin{split}
div(\frac{Du}{\omega}) &=\frac{\alpha}{\omega} \quad x\in \Omega \quad \omega =\sqrt{1+|Du|^2}\\
u(x)&=\psi(x)\quad x\in \partial\Omega
\end{split}\right . 
\ene 
is uniquely solved in $C^2(\Omega)\cap C(\bar{\Omega})$ for any $\alpha\in \R$ and any $\psi(x)\in C(\P\Omega)$. Here $div$ denotes the divergence of $\Omega$. 
 \et 
 Note that \eqref{eq:teq} in Euclidean spaces was firstly solved by White \cite{Whi15} and Wang \cite{Wang11}, see also Ma \cite{Ma18}. The above theorem yields the existence of minimal graphs with continuous boundaries in any mean convex conformal cone $Q_\phi$ with the NCM assumption if $\phi(r)$ is a positive $C^2$ function and satisfies 
  \be \tag{cA}
  \phi'(r)>0 \text{ on } (-\infty, A), |\F{\phi'(r)}{\phi(r)}|\leq \mu_0 \text{ on  } (-\infty,a)
  \ene
   for some $a<A$ and a positive constant $\mu_0$. By remark \ref{condition:Remark:one} the condition (cA) is sufficiently general. In addition if $\phi(r)$ satisfies 
   \be \tag{cB}
   (\log\phi(r))''\geq 0
   \ene 
   the solution is unique (see Theorem \ref{thm:A:B}). 
     With these assumptions in $Q_\phi$, we can construct a series of mean convex domains $D_{k,\alpha}$ as follows:
     \be
     \dk:=\{(x,t): x\in \Omega : t\in (u_{-k}(x), \alpha)\}
     \ene 
     where $u_{-k}(x)$ is the solution to \eqref{mean:curvation} with boundary data $\psi(x)-k$ (Lemma \ref{mean:convex:lm}). Consider the area minimizing problem restricted in $\bar{D}_{k,\alpha}$ (the closure of $\dk$) similar as that in \eqref{area-minimizing} in $\fG$. We show that the corresponding area minizing current is a boundary of a {\Ca} set restricted in $\bdk$(Lemma \ref{eq:lm:st}).  Moreover this current is disjoint with $\P D_{k,\alpha}\backslash \Gamma$(Lemma \ref{lm:disjoint}).  When $k$ is sufficiently large and $\alpha$ is close to $A$,  then the area minizing current is just the minimal graph in Theorem \ref{thm:A:B} (Theorem \ref{thm:MT:C}).  In the proof we need a regularity result of almost minimal sets in section 3 and a maximum principle of  $C^{1,\alpha}$ hypersurfaces in appendix A.  \\
     \indent Now we assume $\phi(r)$ satisfies the condition (cA) and the condition (cC) given by 
      \be \tag{cC}
      \lim_{r\rightarrow A-}\phi(r)=+\infty \quad \lim_{r\rightarrow A-}\inf_{s\in[r, A)}(\log \phi(r))'\geq c>0
      \ene   
    and $\Omega$ is mean convex with the NCM property. Note that  $\Omega\PLH\{A\}$ is the infinity boundary of $Q_\phi$.  As an application of Theorem \ref{thm:A:B} we can push minimal graphs with finite data into infinity to  obtain a complete minimal graph with prescribed infinity boundary data in $Q_\phi$ (see Theorem \ref{main:thm:D}). \\
 \indent Besides the conditions of $\phi(r)$, two main assumptions in section 2  and section 4 are the mean convex condition and the NCM assumption on $\Omega$. In the setting of translating conformal cones, the former condition can be relaxed as $\Omega$ is  contained in a larger $C^2$ domain $\Omega^*$ with mean convex and NCM assumptions. Then the area minimizing problem in \eqref{area-minimizing} is equivalent to a minimizing problem of some area functionals in \eqref{BV:minimum} (Theorem \ref{central:fact}). Then we show the existence of area minimizing current in \eqref{area-minimizing} in $\fG$ by the compactness of BV functions,  Theorem \ref{central:fact} and Theorem \ref{thm:MT:C}.\\
 \indent As for the NCM assumption,  it can not be removed if we hope the conclusions in Theorem \ref{thm:A:B} and Theorem \ref{thm:MT:C} hold. We show that in the case that $\Omega$ is the hemisphere $S^n_+$ and $\phi(r)=e^{\F{\alpha}{n}r}$ for $\alpha \geq n$, there is no bounded $C^2$  solution to  the Dirichlet problem in \eqref{eq:teq} for any $\psi(x)\in C(\P  S^n_+)$ (Theorem \ref{thm:one:two}) and no solution in $\fG$ to the problem \eqref{area-minimizing} (Theorem \ref{thm:two:two}). \\
  \indent This paper is organized as follows. In section 2 we discuss the Dirichlet problem of minimal surface equations in conformal cones. As an application we extend the result of Anderson in \cite{And82} on the existence of complete minimal graphs over the infinity boundary in Hyperbolic spaces into a class of conformal cones. \\
  \indent In  section 3  we collect preliminary facts on currents, BV functions, perimeter and (almost) minimal sets. We also discuss the regularity of almost minimal sets when their boundaries pass through the boundary of the intersection of two $C^2$ domains (Theorem \ref{regularity:key:thm}).  \\
 \indent In section 4 we solve the area minimizing problem in \eqref{area-minimizing} in the mean convex conformal cone with the NCM assumption. The results of Rado \cite{Rado32} and Tausch \cite{Tau80} are extended into the setting of conformal cones. \\
 \indent In section 5  we consider the problem  \eqref{area-minimizing} for translating conformal cones with non-mean convex boundaries. This extends Lin's result in Euclidean cylinders \cite{Lin85}. In section 6 we discuss examples to illustrate that the NCM assumption should not be removed for main results in this paper. In appendix A we record a maximum principle on $C^{1,\alpha}$ hypersurfaces via comparing their mean curvature. 
\section{Minimal surface equations from conformal cones}
 In this section we study the Dirichlet problem of minimal surface equations in conformal cones.  
 \subsection{Preliminiaries}
 Let $M$ be a $m$-dimensional complete Riemannian manifold with a metric $g$. Suppose $S$ is a $C^2$ hypersurface in $M$ with a normal vector $\vec{v}$. 
 \begin{Def} \label{def:mc} We call $div(\vec{v})$ as the \emph{mean curvature} of $S$ with respect to $\vec{v}$, written as $H_{S}$. Here $div$ is the divergence of $M$. If $H_S\equiv 0$, we say $S$ is minimal. 
 	\end{Def} 
 
   Let $\P \Omega$ be the boundary of a $C^2$ domain $\Omega$ in $M$. We always take its outward normal vector. With this convention  the mean curvature of the unit sphere $S^n$ in Euclidean spaces $\R^{n+1}$ is $n$. 
 \begin{Def} If $H_{\P \Omega}\geq 0$, we say that $\Omega$ is a mean convex domain. 
 	\end{Def}
 \indent  
 Let $f$ be a $C^2$ function on $M$. We write the manifold $M$ equipped with the metric $e^{2f}g$ as $\tilde{M}$.  The relation of the mean curvature of a hypersurface in two manifolds is given as follows. 
 \bl [Lemma 3.1 in \cite{ZhouA19}]\label{lm:mean:curvature} Suppose $S$ is a $C^2$ orientable hypersurface with the normal vector $\vec{v}$ in $M$. Let $\tilde{H}$ and $H$ denote the mean curvatuere of $S$ in $\tilde{M}$ and $M$ respectively.  Then 
 \be 
 \tilde{H}= e^{-f}(H+(m-1)df(\vec{v}))
 \ene  
 \el 
Now we recall the definition of conformal cones. 
\begin{Def}[Definition \ref{def:conformal:cone}] \label{def:cc:copy}Suppose $\Omega$ is an open bounded Riemanian manifold with $C^2$ boundary and a metric $\sigma$. Let $I$ be an open interval $(-\infty, A)$ where $A$ is a constant or $+\infty$. Let $\phi(r)$ be a $C^2$ positive function on $I$.  The set $  (\Omega\PLH I,  \phi^2(r)(\sigma+dr^2)) 
	$ is a conformal cone written as $Q_\phi$. \\
	\indent If $\phi(r)=e^{\F{\alpha}{n} r}$ for a constant $\alpha \geq 0$, we call such cone as a translating conformal cone. Let $C$ be an adjective. If $\Omega$ is $C$ (has the $C$ property), we call such cone as a $C$ conformal cone (with the $C$ property). 
\end{Def}
\br  If $\Omega$ is mean convex, we call $Q_\phi$ as a mean convex conformal cone. 
\er
\br
 Here  the term ``translating" is from the fact that a minimal graph in $Q_\phi$ for $\phi(r)=e^{\F{\alpha}{n}r}$ remains minimal under the translating motion $T_{t_0}(x,t)=(x,t+t_0)$ for a fixed $t_0$ and any $(x,t)\in Q_\phi$.
 \er
   With a parametrization  on $I$  our definition includes a large class of warped product manifolds (see Remark \ref{condition:Remark:one}). Let $S^n$ be the standard $n$-dimensional unit sphere with the metric $\sigma_n$. Then Euclidean spaces $\R^{n+1}$ and Hyperbolic spaces $\H^{n+1}$ can be written as follows. 
\begin{gather}
\R^{n+1}\backslash\{0\}:=(S^n\PLH\R, e^{2r}(\sigma_n+dr^2))\\
\H^{n+1}\backslash\{0\}:=(S^n\PLH (-\infty, 0), \F{4e^{2r}}{(1-e^{2r})^2}(\sigma_n+dr^2))\label{model:hyperbolic}
\end{gather}
Thus all cones in Ecludiean space are translating conformal cones. 
\indent 
 Now we apply Lemma \ref{lm:mean:curvature} into the case of conformal cones. 
\bl \label{lm:mc} Let $Q_\phi$ be a conformal cone given in Definition \ref{def:cc:copy}. Suppose $u(x)\in C^2(\Omega)$ and $\Sigma=(x,u(x))$. Then the mean curvautre of $\Sigma$ {\wrt} the upward normal vector in $Q_\phi$ is written as 
\be 
     H_\Sigma=\F{1}{\phi(u(x))}\{-div(\F{Du}{\omega})+n\F{\phi'(u(x))}{\phi(u(x))}\F{1}{\omega}\}
\ene 
where $\omega=\sqrt{1+|Du|^2}$ and $div$ is the divergence of $\Omega$. 
\el 
\bp  In the product manifold $\Omega\PLH \R $ the upward normal vector of $\Sigma$ is $\vec{v}=\F{\P_r- Du}{\omega}$. Here $Du$ is the gradient of $u$ on $\Omega$. A direct verification shows that the mean curvature of $\Sigma$ in the product manifold is $-div(\F{Du}{\omega})$ (see \cite{Zhou17a}).  Our conclusion follows from Lemma \ref{lm:mean:curvature} and $e^{f}=\phi(r)$. \ep 
As a corollary we have 
\begin{cor}\label{cor:min} Suppose $u\in C^2(\Omega)$.  Then its graph is minimal in $Q_\phi$ if and only if $u(x)$ satisfies that 
	\be \label{eq:min}
	 -div(\F{Du}{\omega})+n\F{\phi'(u(x))}{\phi(u(x))}\F{1}{\omega}=0
	\ene 
	where $\omega=\sqrt{1+|Du|^2}$ and $div$ is the divergence of $\Omega$. 
	\end{cor}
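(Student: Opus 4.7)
The plan is to apply Definition \ref{def:mc}, which characterizes minimality of a $C^2$ hypersurface by the vanishing of its mean curvature, to the graph $\Sigma = \{(x,u(x)) : x \in \Omega\}$ sitting inside the conformal cone $Q_\phi$. The mean curvature of this graph with respect to the upward normal has just been computed in Lemma \ref{lm:mc}, so essentially no further geometric work is required; the corollary is a repackaging of that lemma.

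More concretely, Lemma \ref{lm:mc} yields
\begin{equation*}
H_\Sigma = \F{1}{\phi(u(x))}\left\{-div\left(\F{Du}{\omega}\right) + n\F{\phi'(u(x))}{\phi(u(x))}\F{1}{\omega}\right\}.
\end{equation*}
Since $\phi$ is positive on $I$ by Definition \ref{def:cc:copy}, the conformal prefactor $1/\phi(u(x))$ never vanishes. Therefore $H_\Sigma \equiv 0$ on $\Omega$ is equivalent to the expression inside the braces vanishing identically, which is precisely equation \eqref{eq:min}. Combined with Definition \ref{def:mc}, this gives the ``if and only if'' statement.

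There is no genuine obstacle. The only small point worth verifying is that the orientation conventions line up: Definition \ref{def:mc} defines mean curvature via $div(\vec{v})$ for a fixed normal choice, and Lemma \ref{lm:mc} was proved using the upward normal $\vec{v} = (\P_r - Du)/\omega$, so the sign in front of $div(Du/\omega)$ in \eqref{eq:min} matches the one produced by Lemma \ref{lm:mc} with no ambiguity.
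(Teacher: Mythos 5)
Your proposal is correct and matches the paper's (implicit) argument exactly: the corollary is stated as an immediate consequence of Lemma \ref{lm:mc}, using only the positivity of $\phi$ to cancel the conformal prefactor. Nothing further is needed.
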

 \br 
 In the case of $\phi(r)=e^{\F{\alpha}{n}r}$,  \eqref{eq:min} is called as the \emph{translating mean curvature equation}. 
 \er 

\subsection{Minimal surface equations} Recall that the NCM property is defined as follows.
\begin{Def} [Definition \ref{Def:NCM}] Suppose $\Omega$ is a $n$-dimensional Riemannian manifold with $C^2$ boundary. We say $\Omega$ has the non-closed-minimal (NCM) property if it holds that \begin{enumerate}
		\item if $n\leq 7$ , no closed embedded minimal hypersurface exists in $\bar{\Omega}$;
		\item if $n > 7$ , no closed embedded minimal hypersurface with a closed singular set $S$ with $H^{k}(S)=0$ for any real number $k>n-7$ exists in $\bar{\Omega}$ where $H^k$ denotes the $k$-dimensional Hausdorff measure on $N$;
	\end{enumerate}
\end{Def}
\br \label{remark:example}All bounded $C^2$ domains in Euclidean spaces,  Hyperbolic spaces and the hemisphere $S^n_+$ (not itself ) have the NCM property.  By White \cite{Whi09}, minimal surfaces in those domains have similar isoperimetric inequalities to those in Euclidean spaces. It is also similar to the condition of Giusti \cite{Giu78} on the existence of prescribed mean curvature graphs. 
\er 
 Now we consider the following Dirichlet problem 
 \begin{equation}\label{main:equation:A}
 \left\{\begin{split}
 div(\frac{Du}{\omega}) &=\frac{n\phi^{'}(u(x))}{\phi(u(x))\omega} \quad x\in \Omega \quad \omega =\sqrt{1+|Du|^2}\\
 u(x)&=\psi(x)\quad x\in \partial\Omega
 \end{split}\right . 
 \end{equation}
 where $\psi(x)$ is a continous function on $\P\Omega$ and $div$ is the divergence of $\Omega$. The main result in this section is stated as follows. \bt\label{thm:A:B}  Let ${\Omega}$ be a $C^2$ bounded domain with mean convex  boundary and the NCM property.  Suppose a positive $C^2$ function $\phi(r)$ satisfies 
 \be \tag{cA}
 \phi'(r)>0 \text{ on } (-\infty, A), |\F{\phi'(r)}{\phi(r)}|\leq \mu_0 \text{ on  } (-\infty,a)
 \ene
 for some $a<A$ and a positive constant $\mu_0$. Then the Dirichlet problem $\eqref{main:equation:A}$ admits a solution $u\in C^2(\Omega)\cap C(\bar{\Omega})$ for any $\psi(x)\in C(\P\Omega)$ satisfying $\psi(x)<A$ on $\P\Omega$.   \\
 \indent In addition if 
  \be \tag{cB}
 (\log\phi)^{''}(r)\geq 0\text{ for all } r \in (-\infty,A)
 \ene  such solution is unique. 
 \et
 \br We believe that the condition (cB) is just a sufficient condition to obtain the unique
 ness result but not a necessary condition. 
 \er
  \br\label{condition:Remark:one}The condition (cA) is very general in terms of warped product manifolds. Let $m(s)$ be a positive $C^2$ function on an interval $I_1:=(a,b)$ with $m(a)=0$. Then  a cone in a warped product manifold is defined as 
 \be \label{dst}
 \{ \Omega\PLH I_1,   ds^2+m^2(s)\sigma\}
 \ene 
 where $\sigma$ is a metric on $\Omega$.  Now let $r$ be a new parameter on $I$ given by $r=\int \F{1}{m(s)}ds$. Suppose $s=s(r)$ is the corresponding inverse function. Now  $\phi(r)=m(s(r))$ and \eqref{dst} is rewritten as 
 \be 
 \{ \Omega\PLH (-\infty, A),   \phi^2(r)(\sigma+dr^2)\}
 \ene 
 for some constant $A$. As a result $m'(s)=\F{\phi'(r)}{\phi(r)}$. The condition (cA) for $\phi(r)$  is equivalent to $m'(s)>0$ on $I_1$ and $m'(s)$ are uniformly bounded on some subinterval $(a,b_0)$ for $b_0<b$.\\
 \indent  The condition (cB) for $\phi(r)$ is equivalent to $m''(s)\geq 0$ for all $s\in (a,b_0)$. 
 \er

\bp  Under the condition (cB) the uniqueness of the solution to \eqref{main:equation:A} is obvious from the maximum principle of elliptic equations. So we skip its proof. \\
\indent Now we take the condition (cA) and focus on the existence of the solution to \eqref{main:equation:A}. Its proof is a standard process according to section 11.3 in \cite{Tru67}). In the following, we use $C(a,b,c,\cdots,)$ to denote a constant only depending on $a,b,c,\cdots$. \\
 \indent First we assume that $\psi(x)\in C^3(\bar{\Omega})$ and $H_{\P\Omega}>0$ on $\P\Omega$.  Let $u(x)$ be a function in  $C^2(\Omega)\cap C(\bar{\Omega})$ solving the Dirichlet problem \eqref{main:equation:A}. By the condition (cA) and the maximum principle we have for all $x\in \Omega$, 
 $$
 u(x)\leq B:=\max_{\P\Omega}\psi(x) 
 $$
 Again by the condition (cA) there is a $\beta>0$ such that 
 $n\max_{r\in (-\infty,B]}|\F{\phi'(r)}{\phi(r)}|\leq \beta$. According to Theorem \ref{tsh:thm}, there is a $w(x)\in C^2(\Omega)$ solving 
 \be \label{eq:n:B}
 \left\{\begin{split}
 	div(\frac{Du}{\omega}) &=\frac{\beta}{\omega} \quad x\in \Omega \quad \omega =\sqrt{1+|Du|^2}\\
 	u(x)&=\min_{\P\Omega}\psi(x)\quad x\in \partial\Omega
 \end{split}\right .
 \ene  
Consider $v(x)=u(x)-w(x)$. Suppose $v(x)$ achieves $\min_{\bar{\Omega}}v(x)$ at $x_0\in \Omega$.  Then from \eqref{main:equation:A} $v(x)$ satisfies the following type nonlinear equation 
\be\label{eq:mi:step}
   a^{ij}(Du) v_{ij}+b^j v_j=\F{n\phi^{'}(u(x))}{\phi(u(x))\sqrt{1+|Du|^2}}-\F{\beta}{\sqrt{1+|Dw|^2}}
   \ene 
  Here $\{a^{ij}(Du)\}$ is a positive definite matrix near $x_0$ and $v_{ij}$ denotes the covariant derivatives of $v$. Note that $Du=Dw$ at $x_0$.  From the maximum principle, the definition of $\beta$ and \eqref{eq:mi:step} at $x_0$ we have 
  \be 
  0\geq a^{ij}(Du)v_{ij}\geq 0
  \ene 
  This is a contradiction to the weak maximum principle. Thus $v(x)$ takes its minimum at $\P\Omega$. Because $u(x)=\psi(x)\geq w(x)$ on $\P\Omega$, then $v(x)\geq 0$ on $\bar{\Omega}$. Namely 
  \be \label{cobound}
      \max_{\P\Omega} \psi(x)\geq u(x)\geq w(x)
  \ene 
  where $w(x)$ is only depending on $\min_{\P\Omega}\psi(x)$ and $\beta$. \\
  \indent  Since $\Omega$ is strictly mean convex, i.e. $H_{\P\Omega} >0$,  arguing exactly Theorem 14.6 in \cite{GT01} yields that 
  \be\label{eq:boundary:estimate}
  \max_{\P \Omega}|Du|\leq C(\Omega, |\psi(x)|_{C^2(\bar{\Omega})})
 \ene 
  Note that $u(x)$ satisfies 
  \be 
    \tilde{\sigma}^{ij}u_{ij}=n\F{\phi'(u)}{\phi(u)}
  \ene 
  where $\tilde{\sigma}^{ij}=\sigma^{ij}-\F{u^iu^j}{1+|Du|^2}$ and $u^{i}=\sigma^{ik}u_i$. Here $\sigma^{ij}$ is the inverse matrix of the metric $\sigma=\sigma_{ij}dx^idx^j$ on $\Omega$ with respect to a local coordinate $\{x^1, \cdots, x^n \}$. 

 Recall that $\omega=\sqrt{1+|Du|^2}$. By Lemma 3.5 in \cite{zhou18b}, we just view $u(x)$ as a $C^2$ function indepent of time $t$ and obtain that 
 \be\label{eq:borrow:lemma}
 \begin{split}
 	\tilde{\sigma}^{ij}\omega_{ij}&-\frac{2}{\omega}\tilde{\sigma}^{ij}\omega_i\omega_j\\
 	&=(|A|^2+Ric(\frac{Du}{\omega}, \frac{Du}{\omega}))\omega 
 	+n \left<\frac{Du}{\omega}, D(H\omega)\right>
 	\end{split}
 	\ene 
 	where $
 	|A|^2=\frac{1}{\omega^2}\tilde{\sigma}^{ik}\tilde{\sigma}^{jl}u_{ij}u_{kl}$, $  \tilde{\sigma}^{ij}={\sigma}^{ij}-\frac{u^i u^j}{1+|Du|^2}
 	$
 	 and Ric is the Ricci curvature of $\Omega$ and $H=div(\F{Du}{\omega})$ by \eqref{main:equation:A}. \\
 	\indent 
 Let $\eta$ be the function $e^{Ku}$ where $K$ is a sufficiently large constant determined later. Our purpose is to show that $\eta\omega$ is uniformly bounded when $K$ is a large constant only depending on $\max_{\bar{\Omega}}|u|$ and the metric on $\Omega$.\\
 \indent  Suppose that $\eta\omega$ attains its maximum in $\bar{\Omega}$ at $y_0\in \Omega$. At $y_0$ we have  $\omega_i\eta+\omega\eta_i=0$ for any $i\in \{1,\cdots,n\}$. Furthermore assume that $|Du|\ge 1$ at $y_0$.  Otherwise nothing needs to prove.  A direct computation (see section 3 in \cite{zhou18b}) yields that 
 	\begin{align*}
 	&\ts^{ij}(\omega\eta)_{ij}
 	=\omega \ts^{ij}\eta_{ij} +\eta \ts^{ij} \omega_{ij} -\frac{2}{\omega}\tilde{\sigma}^{ij}\omega_i\omega_j \eta \\
 	&=\eta\omega\left\{|A|^2+Ric(\frac{Du}{\omega}, \frac{Du}{\omega})+n{(\frac{\phi{'}}{\phi})}'\frac{|Du|^2}{1+|Du|^2}+ \ts^{ij}(K^2 u_i u_j + K u_{ij})\right\}\\
 	&=\eta\omega\left\{|A|^2+Ric(\frac{Du}{\omega}, \frac{Du}{\omega})+nK\frac{\phi{'}}{\phi}+(K^2+n(\frac{\phi{'}}{\phi})‘)’\frac{|Du|^2}{1+|Du|^2}\right\}\\
 	&\geq\eta\omega(\frac{1}{2}K^2-C) 
 	\end{align*}
 	where $C$ is a positive constant only depending on $\max_{\bar{\Omega}}u$,  $\min_{\bar{\Omega}}u$ and the lower bound of Ricci curvature on $\bar{\Omega}$. Taking $K$ sufficiently large, we have $\ts^{ij}(\omega\eta)_{ij}>0$ at $y_0$. This contradicts that $\eta\omega$ attains its maximum on $\bar{\Omega}$ at $y_0$. Combining this with \eqref{eq:boundary:estimate},  we have 
 	\be \label{eq:estimate}
 	\max_{\bar{\Omega}}|Du|\leq C(\max_{\bar{\Omega}}u,\min_{\bar{\Omega}}u, |\psi(x)|_{C^2(\bar{\Omega})})
 	\ene 
  Let $u^s(x)$ be the solution to the following equation 
 	\be 
 	\left\{\begin{split}
 	div(\frac{Du}{\omega}) &=s\frac{n\phi^{'}(u(x))}{\phi(u(x))\omega} \quad x\in \Omega \quad \omega =\sqrt{1+|Du|^2}\\
 	u(x)&=s\psi(x)\quad x\in \partial\Omega
 	\end{split}\right . 
 	\ene 
 	for any $s\in  [0,1]$ in $C^2(\Omega)\cap C(\bar{\Omega})$. Then arguing as \eqref{cobound},
 $$	 \max_{\P\Omega} \psi(x)\geq u^s(x)\geq w(x)$$
 for all $s\in [0,1]$. Therefore a similar derivation as in \eqref{eq:estimate} yields the following estimate 
 \be 
 	\max_{\bar{\Omega}}|Du^s|\leq C(\max_{\bar{\Omega}}|w(x)|, |\psi(x)|_{C^2(\bar{\Omega})})
 \ene 
 which is independent of $s$. 
 		By the standard Schauder estimates, $\sup_{\bar{\Omega}}|D^2 u^s|$ is also uniformly bounded for any $s\in [0,1]$.  
 	By the continuous method in Theorem 11.3 of \cite{GT01} (see section 11.3 in \cite{GT01}) the Dirichlet problem \eqref{main:equation:A} is solved for $\psi(x)\in C^3(\bar{\Omega})$ and $H_{\P \Omega}>0$ on $\P \Omega$. \\
 	\indent Now suppose $\psi(x)\in C^3(\bar{\Omega})$ and $H_{\P\Omega}\geq 0$. We evolve $\P\Omega$ with the mean curvature flow,  $\Sigma_t$ exists smoothly on $t\in [0,T]$ for some $T>0$. By corollary 3.5 (i) in \cite{Hui86},  the mean curvature $H$ along $\Sigma_t$ satisfies 
 	\be
 	\P_t H=\Delta H+H(|A|^2+Ric(\vec{v},\vec{v})) 
 	\ene 
 	where $|A|^2$ is the norm of second fundamental form of $\Sigma_t$, $Ric$ is the Ricci curvature of $\Omega$ and $\vec{v}$ is the normal vector of $\Sigma_t$. Since $H_{\P\Omega}\geq 0$ at time $t=0$, the maximum principle implies that $H_{\Sigma_t}>0$ for all $t\in (0,T)$. Moreover the domains enclosed by $\Sigma_t$, $\Omega_t$, are contained in $\Omega$. Thus $\{\Omega_t\}_{t\in (0,T)}$ have mean convex boundaries and the NCM properties.  \\
 	\indent Since $\Sigma_t=\P\Omega_t$ converges to $\P\Omega$ in the $C^2$ sense,  we can construct a series of  $\{\psi_t(x)\in C^3(\bar{\Omega}_t)\}$ for $t\in (0,T)$ and converge to $\psi(x)$ as $t\rightarrow 0$ on $\bar{\Omega}$. Thus we can assume $\psi_t(x)$ is uniformly bounded. By the perivious argument there is a family of $\{u_t(x)\}$ satisfies  $div(\F{Du}{\omega})=\F{\phi'(u(x))}{\phi(u(x))}\F{1}{\omega}$ on $\Omega_t$ with $u_t(x)=\psi_t(x)$ on $\Omega_t$. Arguing exactly as \eqref{eq:n:B}-\eqref{cobound}, then 
 	  \be
 	    \max_{\bar{\Omega}_t}|u_t(x)|\leq  C(\max_{\bar{\Omega}}|\psi(x)|,\beta) 
 	 \ene
 	  for all $t\in (0,T]$ and $x\in \Omega_t$.\\
 	  \indent  By Lemma \ref{lm:interior:estimate}, then for any fixed $x\in \Omega$, $|Du_t (x)|$ is locally uniform bounded near $x$. By the Schauder estimate, so is the $C^2$ norm of $u_t(x)$ near $x$. Thus $u_t(x)$ converges to a function $u(x)$ in the locally  $C^2$ sense on $\Omega$ as $t\rightarrow 0$. Thus $u(x)=\psi(x)$ on $\P\Omega$ and satisfies that $div(\F{Du}{\omega})=\F{\phi'(u(x))}{\phi(u(x))}\F{1}{\omega}$. \\
 	  \indent For any $\psi(x)\in C(\P\Omega)$, we can construct a sequence $\{\psi_j(x)\}\in C^3(\bar{\Omega})$ such that this sequence converges to $\psi(x)$ in $C(\P\Omega)$. A similar approximating process above yields the Dirichlet problem \eqref{main:equation:A} with boundary data $\psi(x)$. The proof is complete. \ep

The following interior estimate of mean curvature equations is based on a work of Wang in \cite{Wang98} (see also  Lemma 2.3 in \cite{CHH19}).
 \bl [Theorem 1.4 in Gui-Jian-Ju in \cite{GJJ10}] \label{lm:interior:estimate}Let $B_r(x_0)$ be an embedded ball in $\Omega$ and $u(x)\in C^2(B_r(x_0))$ satisfies that 
 \be \label{eq:condition:assumption}
 div(\F{Du}{\omega})=\F{f(u(x))}{\omega}
 \ene 
 where $\omega=\sqrt{1+|Du|^2}$ and $f(r)$ is a $C^1$ function on $\R$. Let $a<b$ be two finite constants such that $a\leq u(x)\leq b$ on $B_r(x_0)$. Then 
 \be 
 |Du(x_0)|\leq C(r, |f|_{C^1[a,b]}, \max_{\bar{\Omega}}|Ric|)
 \ene 
 where $|f|_{C^1[a,b]}$ is the $C^1$ norm of $f$ on $[a,b]$. 
 \el 
 \br  The $f(r)$ in the right side of \eqref{eq:condition:assumption} does not affect the derivation of Theorem 10 in \cite{GJJ10}.  In their case they just consider $f\equiv 1$. 
 \er 
Combining the existence and the uniqueness in Theorem \ref{thm:A:B} we obtain the following continous result with respect to the boundary data. 
 \bl \label{lm:continous} Take the same assumptions as in Theorem \ref{thm:A:B}.  Suppose $\psi(t, x)$ is a continous function on $\P\Omega \PLH[0,1)$. For $t\in [0,1)$ let $u_t(x)$ be the solution to the Dirichlet problem
 \begin{equation}\label{st:eq:w}
 \left\{\begin{split}
 div(\frac{Du}{\omega}) &=\frac{n\phi{'}(u)}{\phi(u)\omega} \quad x\in \Omega \quad \omega =\sqrt{1+|Du|^2}\\
 u(x)&=\psi(t,x)\quad x\in \partial\Omega
 \end{split}\right.  
 \end{equation}
 Then $\{u_{t}(x)\}_{t>0}$ converges to $u_{0}(x)$ in the sense of $C(\bar{\Omega})$ and locally in $C^2(\Omega)$ as $t\rightarrow 0$.  
 \el 
 \subsection{The infinity Plateau problem} Now we immediately give an application of Theorem \ref{thm:A:B}.  Consider a conformal cone given by 
 \be \label{condition:A}
 Q_\phi :=\{\Omega\PLH(-\infty, A),  \phi^2(r)(\sigma+dr^2)\}
 \ene 
 where $A$ is a finite number and $\phi(r)$ is a $C^2$ positive function satisfying  that 
 \begin{gather}
 \tag{cA}
 \phi'(r)>0 \text{ on } (-\infty, A), |\F{\phi'(r)}{\phi(r)}|\leq \mu_0 \text{ on  } (-\infty,a)\\
 \tag{cC}
 \lim_{r\rightarrow A-}\phi(r)=+\infty \quad \lim_{r\rightarrow A-}\inf_{s\in[r, A)}(\log \phi(s))'\geq c>0
 \end{gather}
 Here $\mu_0, a$ are two constants with $\mu_0>0$ and $a<A$. 
 An example of conformal cones satisfying (cA) and (cC) is the cones in Hyperbolic spaces (see \eqref{model:hyperbolic}).  \\
 \indent In the above setting,  $\Omega\PLH\{A\}$ is referred as the inifnity boundary of $Q_\phi$.  For any $n$-rectifible set $\Gamma\subset \Omega\PLH\{A\}$, the infinity Plateau problem is to find a complete minimal hypersurface in $Q_\phi$  asymptotic to $\Gamma$. For an example in a hyperbolic space more details see \cite{And82}. The second main result in this section is given as follows.
 \bt  \label{main:thm:D} Let $\Omega$ be a $C^2$ mean convex bounded domain with the NCM property. Suppose $Q_\phi$ is a conformal cone satisfying conditions (cA),  (cC). Then there is a smooth function $u(x)$ over $\Omega$ such that its graph $\Sigma$ is a minimal graph in $Q_\phi$ with the infinity boundary $\P\Omega\PLH \{A\}$. 
 \et
 \br There is little geometric information of $Q_\phi$ in this general setting comparing to Hyperbolic spaces.  This result can be viewed as a generalization of Theorem 10 in \cite{And82}.  See also Theorem 2.1 in \cite{LFH89}. Its uniqueness will be considered in the future \cite{CSZ20}. In the proof we just apply the existence in Theorem \ref{thm:A:B}. Thus we do not need the condition (cB) here. 
 \er 
 \bp Let $t\in (-\infty , A)$.  By Theorem \ref{thm:A:B}, there is a $C^2$ function $u_t(x)\in C^2(\Omega)\cap C(\bar{\Omega})$ satisfying 
 \be \label{eq:mid:rstu}
 \left\{\begin{split}
 	div(\frac{Du}{\omega}) &=\frac{n\phi^{'}(u)}{\phi(u)\omega} \quad x\in \Omega \quad \omega =\sqrt{1+|Du|^2}\\
 	u(x)&=t\quad x\in \partial\Omega
 \end{split}\right . 
 \ene 
 \indent Denote the graph of $u_t(x)$ by $\Sigma_t$. By the condition (cA),  arguing as \eqref{cobound}  there is a global constant $\mu_1, t_1<A$ such that $u_t(x)\geq \mu_1$ for any $t\in [t_1, A)$. \\
 \indent  We claim that $\{u_t(x)\}_{t\in [t_1,A)}$ has a local upper bound strictly less than $A$ in $\Omega$. \\
 \indent We use $B_r(x)$ to denote the open ball centerred at $x$ with radius $r$. Now fix $x_0\in \Omega$. There is a $r_0>0$ depending the geometry of $\Omega$ near $x_0$ and $c$ such that $B_{r_0}(x_0)\subset \Omega$ is mean convex with the NCM property. By the condition (cC) choose $\beta$ sufficiently small such that  
 \be
 \beta<\inf_{s\in [t_1,A)}(\log\phi(s))'
 \ene  By Theorem \ref{thm:A:B} the Dirichlet problem 
 \be \label{eq:mid:st}
 \left\{\begin{split}
 	div(\frac{Du}{\omega}) &=\F{\beta}{\omega}\quad x\in  B_{r_0}(x_0)\\
 	u(x)&=A\quad x\in \P B_{r_0}(x_0)
 \end{split}\right . 
 \ene 
 has a unique solution $u_\beta(x)$ in $C^2(\Omega)\cap C(\bar{\Omega})$. Furthermore we can take $\beta$ enough small such that $u_{\beta}(x)\geq t_1$ for all $x\in B_{r_0}(x_0)$. \\
 \indent For any $t\in [t_1, A)$, define $v(x)=u_{\beta}(x)-u_{t}(x)$ in $B_{r_0}(x_0)$. Then $v(x)$ satisfies that 
 \be \label{eq:st:md}
 a^{ij}(Du) v_{ij}+b^j v_j=\F{\beta}{\sqrt{1+|Du_\beta|^2}}-\frac{n\phi^{'}(u_t(x))}{\phi(u_t(x))\sqrt{1+|Du_t|^2}}
 \ene  
 where $a^{ij}(Du)$ is a positive definite matrix. Suppose $v(x)$ achieves its minimum on $\bar{B}_{r_0}(x_0)$ at $y_0\in B_{r_0}(x_0)$. If $u_t(y_0)<t_1$ we have $v(y_0)=u_{\beta}(y_0)-u_t(y_0)>0$. We conclude $v(x)>0$ on $B_{r_0}(x_0)$. If $u_t(y_0)\geq t_1$, by \eqref{eq:st:md} one sees that at $y_0$
 \be 
 0\leq a^{ij}(Du)v_{ij}=\F{1}{\sqrt{1+|Du_t|^2}}(\beta-\frac{n\phi^{'}(u_t(y_0))}{\phi(u_t(y_0))}\leq 0
 \ene 
 This is a contradiction to the weak maximum principle. Thus in both cases we have $v(x)\geq 0$ on the ball $B_{r_0}(x_0)$. Thus $\mu_1\leq u_t(x)\leq u_{\beta}(x)<A$ on the closure of $B_{\F{r_0}{2}}(x_0)$ for any $t\in  [t_0, A)$. \\
 \indent Then there is a sequence $\{t_j\}_{j=1}^\infty$ such that $t_j\rightarrow A-$ and $u_{t_j}(x)$ converges uniformly to $u_{A}(x)$ in $B_{\F{r_0}{2}}(x_0)$ in the $C^0$ norm. By Lemma \ref{lm:interior:estimate}  and the standard Schauder estimate, such convergence holds in the $C^2$ norm on $B_{\F{r_0}{2}}(x_0)$. Because  $x_0\in \Omega $ is chosen arbitrarily,  we can choose a countable open ball in $\Omega$ such that their union is $\Omega$ and the above convergence holds in each ball.   This means there is a sequence $\{t_j\}_{j=1}^\infty$ such that $t_j\rightarrow A-$ and $u_{t_j}$ converges locally to $u_A(x)$ in the $C^2$ norm satisfying \eqref{eq:mid:rstu}. Let $\Sigma$ be the graph of $u_A(x)$. Thus $\Sigma$ is minimal in $Q_\phi$. From the definition of $u_t(x)$ the boundary of $\Sigma$ is $\P\Omega \PLH\{A\}$. \\
 \indent The proof is complete.  
 \ep 
 \section{Currents and almost minimal boundary}
 In this section we collect prelinimary facts on currents and almost minimal boundaries. The main results in this section are Theorem \ref{thm:constant:current} and Theorem \ref{regularity:key:thm}. They play an essential role in the proof of Theorem \ref{thm:MT:C}. 
 \subsection{Currents} 
 Our main references are the book of Simon \cite{Sim83} and Lin-Yang \cite{LY02}.  Let $U$ be an open domain in a Riemannian manifold $M$ and $H^j$ denote the $j$-dimensional Hausdorff measure. Suppose $k$ is an integer. Let $D^k(U)$ be  the set of all $k$-smooth form with compact support in $U$. 
 \begin{Def}
 	A k-current $T$ is a linear continuous functional on $D^k(U)$.   The mass of the current $T$ in $U$ is 
 	\be
 	\bM_U(T):=sup\{T(\omega): \la \omega, \omega\ra \leq 1, \omega \in D^k(U) \}
 	\ene 
 	where $<,>$ denotes the usual pairing of $k$-form. 
 	\end{Def}
  If there is no ambiguity about $U$, we write $\bM (T)$ instead of $\bM_U(T)$. By the Radon-Nikodym Theorem there is a Radon measure $\mu_T$ on $M$ such that for any $\omega\in D^k(U)$, 
 \be 
 T(\omega)=\int_{M}\la \omega, \vec{T}\ra d\mu_T
 \ene
 where $\vec{T}$ is a unit $k$-form a.e. $\mu_T$. Thus $\bM_U(T)=\int_U d\mu_T$. Then we can also discuss the mass of a k-currents in Borel sets. 
 \begin{Def} For a $k$-current $T$, its boundary $\P T$ in $U$ is  a current acting on  $D^{k-1}(U)$ such that $\P T(\omega)=T( d\omega)$ for any $\omega\in C^{k-1}(U)$ where $d$ is the differential operator on smooth forms.\\
 	\indent The \emph{support}, spt(T), of $T$ is the relatively closed subset of $M$ defined by 
 	$$
 	spt T=M\backslash (\cup V)
 	$$
 	where the union is over all open set $V\subset \subset M$ such that $T(\omega)=0$ for any smooth form $\omega$ with $spt \omega\subset V$. 
 	\end{Def} 
  The concept of $k$-current is a generalization of the $k$-dimensional oriented submanifold $S$ in $M$. Suppose $\eta(x)$ is a local orientation of $S$. Then there is a corresponding $k$-current $[[S]]$ is defined by 
 \be \label{def:S}
 [[S]](\omega)=\int_{S}\la \omega(x),\eta(x)\ra d H^k \ene 
 for any smooth vector $\omega$ with compact support. 
 \br\label{rm:def:S} We can also define $[[S]]$ when $S$ is a Borel set approximated by a series of open sets in $M$ if choose $\eta$ as the orientation of $M$.  \er 
 The following concept is very useful in geometric measure theory. 
  \begin{Def} A set $E\subset M$ is said to be countable $k$-rectifiable if 
 	$$ E\subset E_0\cup_{j=1}^\infty F_j(E_j)$$
 	where $H^k(E_0)=0$ and $F_j:E_j\subset \R^k\rightarrow M$ is a Lipschitz map for each $j$. 
 \end{Def}
 Now we can define an integer multiplicity rectifiable $n$-current. 
 \begin{Def} Let $T$ be a $k$ current in $M$, we say that $T$ is an integer multiplicity rectifiable $n$-recurrent (integer multiplicity current) if 
 	\be 
 	T(\omega)=\int_S\la \omega,\eta\ra \theta(x)d H^k(x) 	\ene 
 	where $S$ is a countable $k$-rectifiable subset of $M$, $\theta$ is a positve locally $H^k$-integrable function which is integer-valued, and $\eta$ is a $k$ form $\tau_1\wedge\cdots\wedge \tau_k$ oriented the tangent space of $S$ a.e. $H^k$. $T$ is also written as
 	$\tau(S,\theta, \eta)$. 
 	\end{Def}
 \br \label{rk:mark:notation} According to Remark \ref{def:S}, for any open $k$-submanifold $M'$, $[[M']$ is an {\IMC} just choosing $\eta$ as the orientation, is equal to $\tau(M', 1,\eta)$\\
 \indent If the dimension of an {\IMC} $T$, $\tau(S, \theta,\eta)$, has the same dimension as that of $M$. We always choose $\eta$ as the volume form of $M$. In this case $T$ is written as $\tau(S,\theta)$. 
 \er 
 A good property of {\IMC}s is their compactnesss theorem firstly obtained by Federer and Fleming \cite{Sim83}.
 \bt [Feder-Fleming Compactness Theorem]\label{compact:thm} Suppose $\{T_j\}_{j=1}^\infty$ is a sequence of {\IMC}s with $$\sup\{\bM_W(T_j)+\bM_W(\P T_j)\}<\infty$$ for any $W\subset\subset M$, then there is an integer mulitiplicity current $T$ such that $T_j$ converges weakly to $T$ and $\bM_W(T)\leq \lim_{j\rightarrow +\infty} \sup_{i\geq j} \bM_W(T_i)$. \et 
 A useful way to construct {\IMC}s is the pushforward of local Lipschitz maps. 
 \begin{Def} Let $U, V$ be two open sets in (different) Riemannian manifolds. Suppose $f:U\rightarrow V$ is local Lipschitz,  $T=\tau(S, \eta, \theta)$ is an $k$ {\IMC} and $f|spt T$ is proper,  then we can define $f_\#T$ by
 	\be 
 	f_{\#}T(\omega)=\int_S\la \omega|_{f(x)}, df_{\#}\eta\ra \theta(x) dH^k(x)
 	\ene 
 	\end{Def}
Now we proceed the derivation in section (26.26, \cite{Sim83}) under the setting of a conformal cone $Q_\phi$ in Definition \ref{def:conformal:cone}. Let $\{(x,t):x\in \Omega, t\in I=(-\infty, A)\}$ be a coordinate in $Q_\phi$.  Now for any $t\leq 0$ we define $h:(-\infty, 0)\PLH  Q_\phi\rightarrow Q_\phi$ as $h(t, (x,r))=(x, r+t)$.  Note that $h$ is proper and local Lipschitz {\wrt} $Q_\phi$. Suppose $T$ is a $k$ {\IMC} with compact support in $Q_\phi$. Then $h_{\#}([[(-\infty, 0)]]\PLH T)$ is well-defined. Because  $h(0,(x,r))=(x,r)$, we have 
 \begin{align*}
 \P h_{\#}([[(-\infty, 0)]]\PLH T)&=h_{\#}(\P ([[(-\infty, 0)]]\PLH T))\\
 &= h_{\#}(\{0\}\PLH T)-h_{\#}((-\infty,0)\PLH \P T)\\
 &=T-h_{\#}((-\infty,0 )\PLH \P T)
 \end{align*}
The case $\P T=0$ yields the following result. 
 \bt \label{thm:constant:current} Now let $\Omega,\phi, Q_\phi$ be defined in Definition \ref{def:conformal:cone}.   Suppose $n$ is the dimension of $\Omega$ and $k\leq n$ is an positive integer. Let $T$ be a $k$ {\IMC} in the conforml cone $Q_\phi$ with compact support satisfying $\P  T=0$.  Then there is a $k+1$ {\IMC} $R$ in $Q_\phi$ such that $\P R =T$. Here  $spt (R)$ may be noncompact in $\bar{\Omega}\PLH(-\infty, A)$.
 \et 
 \subsection{Perimeter and Regularity}
 In this subsection we recall some preliminar facts on BV functions, perimeter and the regularity of almost minimal boundary for later use. The main references are \cite{Giu84}, \cite{LY02} ,\cite{Sim83} and \cite{Zhou19}. \\
 \indent Let $M$ be a {\RM} with a metric $g$ and dimension $n+1$. Suppose $W\subset M$ is an open set. We denote the set of vector fields (continuous functions) on $N$ with compact support in $W$ by $T_0W$ ($C_0(W))$.  
 \begin{Def} 
 	For any 
 	$u\in L^1(W)$, the variation of $u$ is defined as 
 	\be\label{def:variation}
 	||Du||_M(W)=\sup\{\int_{W}udiv(X)dvol: X\in T_0(W), \la X,X\ra\leq 1\}
 	\ene 
 	where $div$ and $dvol$ are the divergence and the volume of $M$ respectively. 
 	We say $u\in BV(W)$ if $||Du||_{M}(W)$ is finite. \\
 	\indent   We say $E$ is a {\Ca} set in $W$ if its characteristic function $\lambda_E\in BV(W')$ for each bounded open set $W'\subset \subset W$. And 
 	$||D\lambda_E||_M(W)$ is called the perimeter of $E$ in $W$.
 \end{Def}
\br For a {\Ca} set $E$ all properities are unchanges if we make alterations of any (Lebesgue) measure zero set. Arguing exactly as Proposition 3.1 in \cite{Giu84},  we can always choose a set $E'$ differing a Hausdorff measure zero set with $E$ and satisying for any $x\in \P E'$ 
  \be \label{boundary}
  0<|E'\cap B(x,\rho)|\leq vol(B(x,\rho))
  \ene 
  where $\rho\leq \rho_0$ depending on some compact subset of $E'$ containing $x$. From now on,  we always assume that condition \eqref{boundary} holds for any {\Ca} set $E$. 
\er 

 Suppose $T$ is a $(n+1)$-dimensional  {\IMC}  in $M$, represented as  $\tau(V,\theta)$ where $V$ is a $L^{n+1}$ measurable subset of $M$. By remark \ref{rm:def:S} and the definition of the mass we have 
 \be\label{Mass_and_min}
 \bM_W(\P T)=||D\theta||_{M}(W);  
 \ene 
 for any Borel set $W\subset\subset M$.  For a derivation, see 27.7 in \cite{Sim83}. \\
 \indent There is a \emph{decomposition theorem} for codimension 1 {\IMC}s. 
 \bt [Theorem 27.6 in \cite{Sim83}]\label{dec:thm} Let $dim M=n+1$. Suppose $R$ is a $(n+1)$-dimensional {\IMC}s with $\bM_W(\P R)<\infty$ and the form $\tau(V,\theta)$.  Then there is a decreasing sequence of $L^{n+1}$ measurable sets $\{U_j\}_{j=-\infty}^\infty$ of {\Ca} sets in $M$ such that 
 \begin{gather}
 R=\sum_{j=1}^\infty [[U_j]]-\sum^0_{-\infty}[[V_j]]\quad\text {where }\quad V_j=M\backslash U_j\\
 \P R=\sum_{j=-\infty}^\infty \P [[U_j]],\quad 
 \mu_{\P  R}=\sum_{j=-\infty}^\infty \mu_{\P [[U_j]]}
 \end{gather}
 and in particular $\bM_W(\P R)=\sum_{j=-\infty}^\infty \bM_W(\P  [[U_j]])$ for $\forall W\subset\subset M$. Here $U_j:=\{x\in M: \theta\geq j\}$ for any integer $j$. 
 \et 
 \indent In the reminder of this section for a point $p$ and $r>0$ we denote the open ball centered at $p$ with radius $r$ by $B_r(p)$. Now we define an almost minimal set in an open set and a closed set respectively. 
 \begin{Def}\label{key:def}
 	Suppose $E\subset M$ is a {\Ca} set. Let $\Omega$ be a domain. 
 	\begin{enumerate} 
 		\item We say that $E$ is an almost minimal set in $\Omega$ if for any $p\in \Omega$ there is an $r_0>0$ and a constant $C$ with the property that for any $r< r_0$ and any compact set $K\subset\subset B_r(p)\subset \Omega$, 
 	\be \label{DEF:a}
 	||D\lambda_E||_{M}(B_r(p))\leq ||D\lambda_F||_M(B_r(p))+Cr^{n}
 	\ene 
 	where $F$ is any {\Ca} set satisfying $E\Delta F\subset K$. In particulark if $C=0$, we say $F$ is a minimal set in $\Omega$. 
 	\item  In (1),  if replace $\Omega$ with $\bar{\Omega}$, (the closure of $\Omega$),  we say that $E$ is an almost minimal set in $\bar{\Omega}$; 
 	\item  The regular set of $\P  E$ is the set $\{p\in \P E: \P E$ is a $C^{1,\alpha}$ graph near  $p \}$.  The singular set of $\P  E$ is the complement of the regular set in $\P E$. 
 	\end{enumerate}
 \end{Def}
\br  \label{rk:ctwo}By Lemma 7.6 in \cite{Zhou19} all $C^2$ bounded domains are almost minimal sets in an open neighborhood of their boundaries. 
\er 
 A good property of almost minimal sets in a domain is their boundary regularity.
 \bt[Theorem 1 in \cite{Tam82}, Theorem 5.6 in \cite{DS93}] Suppose a {\Ca} set $E$ is an almost minimal set in a domain $\Omega$. Let $S$ be the singular set of $\P  E$ in $\Omega$. Then 
\begin{enumerate}
	\item  if $n<7$, $S=\emptyset$;
	\item if $n=7$, $S$ consists of isolated points;
	\item if $n>7$, $H^{t}(S)=0$ for any $t>n-7$.  Here $H$ denotes the Hausdorff measure in $M$. 
\end{enumerate}
\label{regularity:accounting:thm}
 \et 
The following technique lemma would be very useful. 
 \bl [Lemma 15.1,\cite{Giu84}]\label{lm:tech} Let $W$ be an open sets and $E$, $F$ be two {\Ca} sets. Then 
 $$
 ||D\lambda_{E\cup F}||_M(W)+||D\lambda_{E\cap F}||(W)\leq ||D\lambda_E||_M(W)+||D\lambda_F||_M(W)
 $$
 \el
  Note that even the proof of Lemma 15.1 in \cite{Giu84} is given in Euclidean spaces, it works on all Riemannian manifolds.    \\
  \indent  A direct application of Lemma \ref{lm:tech} is given as follows. 
 \bl\label{lm:contain:per} Suppose $U_2\subset U_1$ be two {\Ca} sets in a Riemannian manifold $M$. Then for any open set $W\subset M$, it holds that 
 \be 
   \bM_W(\P [[U_1\backslash U_2]])\leq \bM_{W}(\P [[U_1]])+\bM_W(\P [[U_2]])
 \ene 
 \el 
 \bp Let $E$ be $U_1$ and $F$ be $U_2^c$( the complement of $U_2$).  Applying Lemma \ref{lm:tech} yields that 
 \be 
      ||D\lambda_{U_1\backslash U_2}||_M(W)\leq ||D\lambda_{U_1}||_M(W)+||D\lambda_{U_2}||_M(W)
 \ene 
 Here we use the fact that $||D\lambda_{U_2}||_M(W)=||D\lambda_{U^c_2}||_M(W)$.  The proof is complete from \eqref{Mass_and_min}. 
  \ep 
  For almost minimal sets in the closure of open domains we have a regularity result as follows. 
 \bt\label{thm:par:regularity}Let $W$ be a $C^2$  domain and $p\in \P W$. Suppose $E\subset \bar{W}$(the closure of $W$) is an almost minimal set in $\bar{W}$ and its boundary $\P E$ passes through $p$. Then $\P E$ is a $C^{1,\alpha}$ graph in an open ball containing $p$ for some $\alpha\in (0,1)$
 \et
 \bp In our proof we use $C$ to denote different constants.\\
 \indent Because $E$ is an almost minimal set in $\bar{W}$, for any $r<r^*$, any compact set $K\subset\subset  B_r(q)\subset B_{r^*}(p)$ and any {\Ca} set $F$ satisfying $E\Delta F\subset K$, we have 
  \be \label{eq:tA}
 ||D\lambda_{E}||_{M}(B_{r}(q))\leq ||D\lambda_{F\cap W}||_M(B_{r}(q))+Cr^n
 \ene
 On the other hand by remark \ref{rk:ctwo} we have 
 \be \label{eq:tB}
  ||D\lambda_{W}||_{M}(B_{r}(q))\leq ||D\lambda_{F\cup W}||_M(B_{r}(q))+Cr^n
 \ene 
 Now adding \eqref{eq:tA} and \eqref{eq:tB} together and applying lemma \ref{lm:tech}, we obtain 
       $$
        ||D\lambda_{E}||_{M}(B_{r}(q))\leq ||D\lambda_{F}||_M(B_{r}(q))+Cr^n 
       $$
   Thus $E$ is an almost minimal set in the open ball $B_{r^*}(p)$.  \\
    \indent Suppose $\P E$ passes through $p$. In terms of the local coordinate in $B_{r^*}(p)$, $\F{E-p}{\lambda}$ will converge to a minimal cone $G$ in the weak sense as $\lambda\rightarrow 0$ (for example see \cite{Tam82}).  Since $E$ is contained in a $C^2$ domain $W$, $G$ is contained in a half space of Euclidean spaces. By Theorem 15.5 in \cite{Giu84}, $G$ is equal to this half space. Thus the area density of $\P E$ at $p$ is 1. By the Allard regularity theorem,  $\P E$ is a $C^{1,\alpha}$ graph in a sufficiently small open ball containing $p$ for some $\alpha$. The proof is complete. 
 \ep 
\indent  A direct application is given as follows. 
 \bt \label{regularity:key:thm} Let $\Omega_1$ and $\Omega_2$ be two $C^2$ domains in $M$. Define $\Omega'=\Omega_1\cap \Omega_2$.  Fix a point $p$ in $\P\Omega_1\cap \P \Omega_2$.  Suppose $E\subset \Omega'$ is an almost minimal set in $\bar{\Omega}'$ (the closure of $\Omega'$) and $\P E$ passes through $p$, then $\P E$ is a $C^{1,\alpha}$ graph in an open ball containing $p$. 
 \et 
 \br  In general the boundary of $\Omega'$ is not $C^2$.
 \er 
 \bp  Let $C$ denote different positive constants. By Lemma 7.6 in \cite{Zhou19},  there is a $r^*>0$ such that $\Omega_1$ and $\Omega_2$ are almost minimal boundaries in $B_{r^*}(p)$.\\
 \indent Note that $E\subset \bar{\Omega}'$ is an almost minimal set in $\bar{\Omega}'$.  By (2) in Definition \ref{key:def},  we can choose $r^*$ sufficiently small such that for any $r< r^*$, any $q\in B_r^*(q)$, any compact set $K$ in $B_r(q)\subset  B_{r^*}(p)$ and any {\Ca} set $F$ satisfying $E\Delta F\subset K$ it holds that 
 \be \label{term:Tir}
 ||D\lambda_{E}||_{M}(B_{r}(q))\leq ||D\lambda_{F\cap \Omega_1\cap \Omega_2}||_M(B_{r}(q))+Cr^n
 \ene 
 \indent By the definition of $r^*>0$, $\Omega_1$ and $\Omega_2$ are both almost minimal sets in $B_{r^*}(p)$. If necessary we take $r^*$ small enough, it holds that 
 \begin{gather}
  \label{term:Fec}
 ||D\lambda_{\Omega_1}||_M(B_{r}(q))\leq ||D\lambda_{F\cup \Omega_1}||_M(B_{r}(q))+Cr^n\\
  \label{term:Sec}
 ||D\lambda_{\Omega_2}||_M(B_{r}(q))\leq ||D\lambda_{(F\cap \Omega_1)\cup \Omega_2}||_M(B_{r}(q))+Cr^n
\end{gather}
 Combining \eqref{term:Tir} with \eqref{term:Sec} together lemma \ref{lm:tech}  yields that 
 \be \label{term:Four}
 ||D\lambda_E||_M(B_{r}(q))\leq ||D\lambda_{F\cap \Omega_1}||_M(B_{r}(q))+Cr^n
 \ene 
 where $C$ is some constant.
 Now adding \eqref{term:Four} into \eqref{term:Fec}, lemma \ref{lm:tech} gives that  
 \be
 ||D\lambda_E||_M(B_{r}(q))\leq ||D\lambda_{F}||_M(B_{r}(q))+Cr^n
 \ene 
 From the assumption on $F$, $E$ is an almost minimal set in $B_{r^*}(p)$. \\
 \indent   Since $E\subset \Omega_1$, $\Omega_1$ is $C^{2}$ and $p\in \P E$, Theorem \ref{thm:par:regularity} implies that $\P E$ is a $C^{1,\alpha}$ graph in an open ball containing $p$. We complete the proof. 
 \ep 
  \section{The case of mean convex conformal cones}
 In this section we solve the area minimizing problem (see  \eqref{area-minimizing}) in conformal cones under reasonable conditions on $\phi(r)$ and the NCM assumption.  \\
 \indent  Let $Q_\phi$ be a conformal cone in Definition \ref{def:conformal:cone}. Let $I$ be an open interval $(-\infty, A)$ where $A$ is finite or $+\infty$. Recall that $\bar{Q}_\phi$ is the set $\bar{\Omega}\PLH I$ and $\fG$ is the set of all {\IMC}s with compact support in $\bar{Q}_\phi$. Here a closed set $F$ is compact in $\bar{\Omega}\PLH I$ means that $F\subset \Omega \PLH [a,b]$ where $[a,b]$ is a finite interval in  $(-\infty,A)$. \\
 \indent The main result in this section is stated as follows.  
  \bt \label{thm:MT:C} Suppose $\Omega$ is a bounded mean convex $C^2$ domain with the NCM property (see Definition \ref{Def:NCM}). Let $\phi(r)$ be a $C^2$ positive function satisfying 
  	\begin{gather}
  	 \tag{cA}
  	 \phi'(r)>0 \text{ on } (-\infty, A), |\F{\phi'(r)}{\phi(r)}|\leq \mu_0 \text{ on  } (-\infty,a)\\
  	 (\log\phi)^{''}\geq 0\text{ for all } r \in (-\infty,A)\tag{cB}
  	 \end{gather}
  	  for some $a<A$ and some positive constant $\mu_0$. Suppose $\psi(x)\in C^1(\P\Omega)$ satisfying $\psi(x)<A$ and $\Gamma=\{(x,\psi(x)):x\in\P\Omega \}$. Then there is a unique {\IMC} $T_0$ in $\fG$ to realize the minimum of 
\be\label{problem:label}
\min\{\bM(T): T\in \fG, \P T=\Gamma\}
\ene 
Moreover $T_0$ is equal to the graph of $u(x)$ over $\bar{\Omega}$ where $u(x)$ is the solution to \eqref{main:equation:A} with boundary data $\psi(x)$ by Theorem \ref{thm:A:B}.
 \et 
 \br This result extends the results of Rado \cite{Rado32} and Tausch \cite{Tau80} in the case of Euclidean cones. Some uniqueness results for minimal hypersurfaces will be considered in Chen-Shao-Zhou \cite{CSZ20}.
 \er 
 \br  Without the condition (cB) we can still obtain the existence of $T_0\in \fG$. But it is a question that whether $T_0$ is a graph over $\Omega$. 
\er  
Throughout this section we always assume the conditions in Theorem \ref{thm:MT:C} hold. The condition (cA) and (cB) above are just the assumptions in Theorem \ref{thm:A:B}. Thus the following definition is well-defined. 
 \begin{Def} \label{def:uk}Fix any constant $k\geq 0 $.  By Theorem \ref{thm:A:B} let $u_{-k}(x)$ be the solution to the Dirichlet problem \eqref{main:equation:A} when the boundary data is $\psi(x)- k$.
 	\end{Def}  
 This definition is used to construct a series of mean convex compact domains in $Q_\phi$ as mentioned in the introduction. From now on always suppose  $\alpha$ and $k$ are two constants satisfying $\alpha\in [\max_{\P\Omega}\psi(x), A)$ and $k\geq 0$. 
 \bl \label{mean:convex:lm}  Let $\dk$ be a domain given by 
\be\label{eq:stu} 
\dk:=\{(x,t): x\in \Omega , t\in (u_{-k}(x), \alpha)\}
\ene
and $\bdk$ be the closure of $\dk$. Then the boundary of $\dk$ is mean convex in the conformal cone $Q_\phi$ with respect to the outward normal vector except two $C^1$ submanifolds $\{(x, \alpha): x\in\P\Omega \}$ and $\{(x,u_{-k}(x)):x\in\P\Omega\}$.  Here $u_{-k}(x)$ is from Definition \ref{def:uk}. 
\el   

\bp  The boundary of $\dk$ is divided into the following four parts: 
\begin{enumerate}
	\item $A=\{(x,u_{-k}(x)):x\in\Omega\}$;
	\item $B=\{(x, \alpha):x\in \Omega\}$;
	\item $C=\{(x,t): x\in \P\Omega,  t\in (u_{-k}(x),\alpha)\}$; 
	\item $E=\{(x,\psi(x)-k):x\in\P\Omega\}\cup\{(x, \alpha): x\in\P\Omega \}$.
	 \end{enumerate}  
 \indent For part $A$, by \eqref{main:equation:A} and Corollary \ref{cor:min} its mean curvature in $Q_\phi$ is $0$. For part $B$, by Lemma \ref{lm:mc} its mean curvature with respect to the outward (upward) normal vector is $n\F{\phi'(\alpha)}{\phi^2(\alpha)}$. It is positive due to the condition (cA).\\
 \indent  As for part $C$,  its normal vector is perpendicular to $\P_r$. Let  $\tilde{H}_C$ be the mean curvature of $C$ with respect to the outward normal vector in $Q_\phi$.  By Lemma \ref{lm:mean:curvature},  $\tilde{H}_C$ is $\F{1}{\phi(r)}H_{\P\Omega}$ where $H_{\P\Omega}$ is the mean curvature of $\P\Omega$. Since $\Omega$ is mean convex, $\tilde{H}_C\geq 0$ with respect to the outward nomral vector. \\
\indent In summary the mean curvature of part $A,B$ and $C$ in $\dk$ are nonnegative with respect to the outward normal vector of $\dk$. The proof is complete. \ep 
 We consider a local version of the area minimizing problem in \eqref{problem:label} as follows. 
 \be
A_{k,\alpha}:=\min \{\bM(T): spt T\subset \bdk, \quad  \P T=\Gamma\}
 \ene
 By Theorem \ref{compact:thm} there is an {\IMC} $T_{k,\alpha}$ contained in  $\bdk$ with $\P T_{k,\alpha}=\Gamma$ satisfying $\bM(T_{k,\alpha})=A_{k,\alpha}$. 
 \bl \label{eq:lm:st} Fix $\alpha\in (\max_{\P\Omega}|\psi(x)|, A)$ and $k > 0$.  Let $\tka$ be given as above. Suppose $\Omega$ is contained in a complete {\RM} $N$. Then  there is a {\Ca} set $F$ in  $N\PLH (-\infty, A)$ such that $T_{k,\alpha}=\P [[F]]|_{\bdk}$. 
 \el
 \br \label{remark:note} The conclusion of the above lemma still holds if the domain $D_{k,\alpha}$ is replaced with any open set $\Omega\PLH (a,b)$ in $Q_\phi$ where $[a,b]\subset (-\infty, A)$. Our proof is inspired from that of Lemma 7 in \cite{Ilm96}.  
 \er 
 \bp Let $v(x)$ be a $C^2$ function on $N$ such that $u_{-k}(x)< v(x) < \alpha$ on $\bar{\Omega}$ and $v(x)=\psi(x)$ on $\P\Omega$. Let $E$ be the subgraph of $v(x)$ in $N\PLH\R$. That is $\{(x,t):x\in N, t<v(x)\}$. Define $S:=\P [[E]]|_{\bdk}$. Thus $\P S=\Gamma$ and $
 \P(T_{k,\alpha}-S)=0$.\\
 \indent  By Theorem \ref{thm:constant:current} there is a $n+1$ {\IMC} $R$ in  $N\PLH (-\infty, A)$ such that $T_{k,\alpha}-S=\P  R$. Then we have 
 \be\label{eq:reason:first}
 T_{k,\alpha}=\P [[E]]|_{\bdk}+\P R
\ene 
 Observe that $[[E]]+R$ can be represented as $\tau(N\PLH (-\infty,A), \theta)$ where $\theta$ is some integer value measurable function on $N\PLH \R$. Now define a function $\theta_1=\theta$ if $\theta\neq 1$ and $\theta_1=0$ if $\theta=1$. Let $\theta_0$ denote the function of $\theta-\theta_1$. \\
 \indent Set $F:=\{p\in N\PLH(-\infty,A): \theta(p)=1\}$. It is not hard to see that $[[F]]=\tau(N\PLH(-\infty, A), \theta_0)$. For notation see remark \ref{rk:mark:notation}. As a result one has 
    \be \label{eq:reason:two}
   [[E]]+R=[[F]]+G
    \ene 
    where $G$ is the {\IMC} $\tau(N\PLH (-\infty, A),\theta_1)$. From the definition of $E$ and $R$,  we have $spt(G)\subset \bar{Q}_\phi$ which is the set $ \bar{\Omega}\PLH (-\infty,A)$. \\
    \indent Now define  $U_j=\{ p\in N\PLH (-\infty,A): \theta \geq j\}$ for any integer $j$. By the definition of $E$, we have $spt(\P[[U_j]])\subset \bar{Q}_\phi$ for any $j\neq 1$ . Note that $spt(T_{k,\alpha}) \subset \bdk \subset \bar{Q}_\phi$.  Applying the decomposition theorem (Theorem \ref{dec:thm}) on $T_{k,\alpha}$ we obtain 
   \begin{align*}
  \mu_{ T_{k,\alpha}}&=\sum_{j=-\infty,j\neq 1}^\infty\mu_{\P [[U_j]]}+\mu_{\P  [[U_1]]}|_{\bar{Q}_\phi}\\
   &=\sum_{j=-\infty,j\neq 1}^\infty\mu_{\P[[ U_j]]}|_{\bdk}+\mu_{\P  [[U_1]]}|_{\bdk}
   \end{align*}
    This implies that $spt(\mu_{\P [[U_j]]})\subset \bdk$ for each $j\neq 1$ and $spt(\mu_{\P[[ U_1]]})|_{\bar{Q}_\phi}\subset \bdk$.  For $G=\tau(M,\theta_1)$ applying the decomposition theorem gives that 
       \be 
    \mu_{\P G}=\sum_{j=3}^\infty \mu_{\P[[ U_j]]}+2\mu_{\P [[U_2]]}+\sum_{j=-\infty}^0 \mu_{\P [[U_j]]}
    \ene 
    Thus $spt(\P  G)\subset \bdk$.\\
    \indent  As for $F$ the decomposition theorem gives that $\P [[F]]=\P [[U_1\backslash U_2]]$. Since $U_2\subset U_1$, with \eqref{eq:reason:first} and \eqref{eq:reason:two}, Lemma \ref{lm:contain:per} implies that 
     $$
     \mu_{\P [[F]]}|_{\bdk}\leq \mu_{\P [[U_1]]}|_{\bdk}+ \mu_{\P [[U_2]]} \leq \mu_{T_{k,\alpha}}
     $$ 
     In particular $\bM(\P [[F]]|_{\bdk})< \bM(T_{k,\alpha})<\infty$ unless $\mu_{\P G}= 0$. Due to the minimality of $\bM(T_{k,\alpha})$ we have $\P G=0$ and $T_{k,\alpha}=\P [[F]]|_{\bdk}$. \\
     \indent The proof is complete.  \ep 
 Next we show that $T_{k,\alpha}$ only touches the boundary of $\bdk$ at $\Gamma=(x,\psi(x))$. 
 \bl \label{lm:disjoint} With the notaton in \eqref{eq:stu}, $\tka$ is disjoint with $\P\dk\backslash \Gamma$ for any  $k>0$ and any $\alpha \in (\max_{\P\Omega}\psi(x), A)$. 
 \el 
 \bp We argue it by contradiction. 
 Now suppose there is a point $p$ in $(\P \dk \backslash \Gamma)\cap \tka $. \\
 \indent No matter where the position of $p$ is, near $p$ we can view $\P\dk$ is the intersection of two $C^2$ boundaries.  On the other hand from the definition of $\tka=\P [[F]]|_{\bdk}$, $F$ is a minimal set in  the closed set $\bdk$.  Since $\tka$ contains $p$, $\P F$ passes  through $p$. By Theorem \ref{regularity:key:thm}  $\P F$ is a $C^{1,\alpha}$ graph near $p$ (an open ball containing $p$). Moreover it is easy to see that
 \be  \label{de}
  H_{\P F}\leq 0
 \ene 
 near $p$ with respect to the outward normal vector of $\dk$ in the Lipschitz sense.  \\
 \indent By Lemma \ref{mean:convex:lm} the boundary of $\dk$ is mean convex.   Since $\Omega$ is $C^2$ and mean convex, $H_{\P\Omega\PLH\R}\geq 0$ with respect to the outward normal vector of $\Omega\PLH\R$.  By Theorem \ref{max:thm:one} and \eqref{de}, $\P\Omega\PLH\R$ coincides with $\P F|_{\bdk}$ near $p$. Because $k>0$ and $\alpha \in  (\max_{\P\Omega}\psi(x),A)$, the set $
 \{(x,\psi(x)-k)\cup (x, \alpha): x\in\P\Omega \}$ is disjoint with $\Gamma$. By the connectedness of $\P\Omega\PLH \R$, there is at least one point $p'\in \P F_{\bdk}$ in the set $
 \{(x,\psi(x)-k)\cup (x, \alpha): x\in\P\Omega \}$. Moreover the tangent sapce of $\P\Omega\PLH\R$ at $p'$ is equal to that of $\P F$ at $p'$.  Then there is a tangent vector of $\P F$ at $p'$ pointing outward with respect to $\bdk$. Because $p'\notin \Gamma$, all tangent vectors of $\P F$ at $p'$ should point into $\bdk$. Otherwise $\P (\P F|_{\bdk} )$ is not equal to $\Gamma$.  This is a contradiction.  \\
 \indent Thus $\tka$ is disjoint with $\P \dk\backslash \Gamma$. The proof is complete.
 \ep 
 A  direct application of the above lemma is 
 \begin{cor}\label{de:cor}  With the notaton in \eqref{eq:stu}, $\tka\subset \bar{D}_{0,\alpha_0}\backslash\{(x,\alpha_0):x\in \Omega\}$ where $\alpha_0=\max_{\P\Omega}\psi(x)$ and  \be
 	D_{0,\alpha_0}:=\{(x,t): x\in \Omega, t\in (u_0(x), \alpha_0)\}
 	\ene
 	Here $u_0(x)$ is from Definition \ref{def:uk}. 
 	\end{cor}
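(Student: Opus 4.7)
My plan is to establish the two containment properties separately: the upper bound $t \le \alpha_0$ with strict inequality on the open top $\{(x,\alpha_0):x\in\Omega\}$, and the lower bound $t \ge u_0(x)$ on the support of $\tka$. Both parts will follow by sliding barrier arguments combined with the $C^{1,\alpha}$ maximum principle (Theorem \ref{max:thm:one}), in the same spirit as the proof of Lemma \ref{lm:disjoint}. I will represent $\tka = \P[[F]]|_{\bdk}$ using Lemma \ref{eq:lm:st}, so that these become statements about the location of the almost-minimizing boundary $\P F$ inside $\bdk$.

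First I would slide the horizontal slices $S_s := \Omega \PLH \{\alpha_0+s\}$ downward from above. By Lemma \ref{lm:mc} together with condition (cA), each $S_s$ has strictly positive mean curvature $n\phi'(\alpha_0+s)/\phi^2(\alpha_0+s)$ with respect to the upward normal in $Q_\phi$. Let $s_0\ge 0$ be the smallest value for which $S_s$ lies strictly above the support of $\tka$; compactness ensures this is finite. If $s_0>0$, there must be a contact point $p_0 = (x_0,\alpha_0+s_0)$. Since $\alpha_0+s_0 > \alpha_0 \ge \psi(x_0)$, this $p_0$ avoids $\Gamma$, and Lemma \ref{lm:disjoint} prevents it from lying on the rest of $\P D_{k,\alpha}$, forcing $x_0\in\Omega$. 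Theorem \ref{thm:par:regularity}, applied with the halfspace $\{t \le \alpha_0+s_0\}$ as the $C^2$ barrier, upgrades $\P F$ to a $C^{1,\alpha}$ graph near $p_0$, tangent to $S_{s_0}$. Theorem \ref{max:thm:one} then compares mean curvatures ($0$ versus strictly positive) and forces $\P F = S_{s_0}$ in a neighborhood of $p_0$, which is impossible. Hence $s_0 = 0$, giving $t \le \alpha_0$; repeating the argument at $s=0$ rules out any interior tangential contact with $\{(x,\alpha_0):x\in\Omega\}$.

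Next I would slide the minimal graph $\Sigma_0 = \{(x,u_0(x))\}$ downward, setting $\Sigma_s := \{(x,u_0(x)-s):x\in\Omega\}$. A short computation using Lemma \ref{lm:mc} and the minimal surface equation satisfied by $u_0$ shows that $H_{\Sigma_s}$ with respect to the upward normal equals a positive multiple of $(\phi'/\phi)(u_0-s)-(\phi'/\phi)(u_0)$, which is $\le 0$ for $s\ge 0$ by the log-convexity condition (cB). I take $s_0 \ge 0$ to be the smallest $s$ such that $\Sigma_s$ lies strictly below the support of $\tka$. If $s_0>0$, any touch point $p_0 = (x_0,u_0(x_0)-s_0)$ cannot lie on $\P\Omega$ (the boundary trace of $\Sigma_{s_0}$ is $\psi-s_0<\psi$, disjoint from $\Gamma$) nor on the rest of $\P D_{k,\alpha}$ (by Lemma \ref{lm:disjoint}), so $x_0\in\Omega$. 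Theorem \ref{max:thm:one} forces $\P F = \Sigma_{s_0}$ near $p_0$, and unique continuation for the minimal surface equation propagates the equality along the connected component out to $\P\Omega$, contradicting the boundary data mismatch. Hence $s_0=0$ and the lower bound follows.

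The main obstacle I anticipate is verifying that each touch point is a $C^{1,\alpha}$ regular point of $\P F$ rather than one of the possibly singular points allowed when $n>7$. I expect this to follow from Theorem \ref{thm:par:regularity} applied at the touch, since $\P F$ is locally trapped on one side of a $C^2$ barrier ($S_{s_0}$ or $\Sigma_{s_0}$); the NCM assumption on $\Omega$ will handle any residual global structural issues that might arise in the unique-continuation step of the lower-bound argument.
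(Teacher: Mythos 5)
Your proof is correct, but it takes a genuinely different route from the paper's. The paper's own argument is much shorter: condition (cB) gives the monotonicity $u_{-k}(x)\leq u_{-k'}(x)$ for $k>k'\geq 0$, so the domains $D_{k',\alpha'}$ form a nested family shrinking down to $D_{0,\alpha_0}$; since $\tka$ avoids $\P D_{k,\alpha}\backslash\Gamma$ its compact support is trapped in a slightly smaller $\bar{D}_{k',\alpha'}$, where it is again the mass minimizer, so Lemma \ref{lm:disjoint} applies to $(k',\alpha')$ as well, and iterating this (a continuity argument in the parameters) squeezes $\tka$ into $\bar{D}_{0,\alpha_0}$; the open top slice is then excluded by one more touching argument exactly as in Lemma \ref{lm:disjoint}. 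You instead keep $D_{k,\alpha}$ fixed and slide two explicit barrier families: horizontal slices $\Omega\PLH\{\alpha_0+s\}$ (strictly mean convex by (cA)) from above, and the vertical translates $u_0(x)-s$ (supersolutions, i.e. $H\leq 0$ for the upward normal, precisely because $(\log\phi)'$ is nondecreasing by (cB)) from below, rerunning the first-contact machinery each time: regularity at the contact point via the one-sided $C^2$ barrier and Theorem \ref{thm:par:regularity}, then Theorem \ref{max:thm:one}, then propagation along the connected barrier out to $\P D_{k,\alpha}\backslash\Gamma$ to contradict Lemma \ref{lm:disjoint}. The two proofs use the same ingredients ((cB) for the ordering/sign, Lemma \ref{lm:disjoint} to localize contact points away from the lateral and bottom boundary, Theorems \ref{thm:par:regularity} and \ref{max:thm:one}); the paper's is more economical because it recycles Lemma \ref{lm:disjoint} wholesale, while yours is more self-contained and makes explicit the observation that downward translates of a minimal graph are supersolutions under (cB) --- the same mechanism underlying the paper's monotonicity of $u_{-k}$. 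One small caution: what you call ``unique continuation'' in the lower bound is really the open-and-closed coincidence-set argument used in the proof of Theorem \ref{thm:MT:C}; the strict-sign shortcut available for the horizontal slices is not available for $\Sigma_{s_0}$, since $H_{\Sigma_{s_0}}$ can vanish wherever $(\log\phi)'$ is locally constant, so the propagation to $\P\Omega\PLH\R$ is genuinely needed there and should not be waved off to the NCM hypothesis, which plays no role in that step.
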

 \bp  From \eqref{main:equation:A} and Definition \ref{def:uk}, the condition (cB) implies that $u_{k}(x)\leq u_{k'}(x)$ if $k>k'\geq 0$.  By Lemma \ref{lm:disjoint}, there is a $k'\in (0,k)$ and some $\alpha'\in (\alpha_0, A)$ such that  $T_{k,\alpha}$ is disjoint with $\P D_{k',\alpha'}\backslash \Gamma$.. Repeating this process $T_{k,\alpha}$ is always disjoint with $D_{k',\alpha'}$ for all $k'\in (0,k)$ and all $\alpha'\in (\alpha_0, A)$.  Taking the intersections of those domains, we obatin $T_{k,\alpha}\subset \bar{D}_{0,\alpha_0}$. Since $\{(x,\alpha_0):x \in \Omega\}$ is mean convex with respect to the outward normal vector of $D_{0,\alpha_0}$, arguing as in Lemma \ref{lm:disjoint}, $\tka$ is disjoint with $\{(x,\alpha_0):x \in \Omega\}$. The proof is complete. 
 \ep 
 \begin{Def}Now we define a new $C^2$ positive function $\tps(r)$ on $(-\infty, A)$ as 
	\be \label{def:tps}
\tps(r)=\left\{\begin{split}
	& C e^{\beta r}\quad r >\F{A+\alpha_0}{2}\\
	        &\phi(r)\quad  r<\alpha_0
\end{split}\right. 
\ene 
Here $\beta$ is $\max\{r\in(-\infty, \F{A+\alpha}{2}):\F{\phi'(r)}{\phi(r)}\}$ and $C>0$ is a large constant such that $\tps'(r)>0$ on $[\alpha_0, A]$ and $(\log \tps)''(r)\geq 0$ on $(-\infty,A)$. Here $\alpha_0=\max_{\P\Omega}\psi(x)$. 
\end{Def}
\br\label{rk:mst}
Since $\phi(r)$ satisfies the condition (cA) and (cB) in Theorem \ref{thm:A:B},  so is $\tps(r)$. \\
\indent Moreover by the definition of $\tps(r)$ there is a constant $\beta_0>\beta>0$ such that 
$|\F{\tps'(r)}{\tps(r)}|\leq \beta_0$ for all $r\in (-\infty,A)$. 
 \er 
\indent Recall that in this section $\Omega$ is bounded, mean convex with the NCM property.  By the above remark and Theorem \ref{thm:A:B} the following definition is well-defined. 
\begin{Def}\label{def:mu} For any $\mu\geq 0$ let $\tilde{u}_{\mu}(x)$ be the solution to the Dirichlet problem
	 \begin{equation}\label{def:rstq}
	\left\{\begin{split}
	div(\frac{Du}{\omega}) &=\frac{n\tps^{'}(u(x))}{\tps(u(x))\omega} \quad x\in \Omega \quad \omega =\sqrt{1+|Du|^2}\\
	u(x)&=\psi(x)+\mu \quad x\in \partial\Omega
	\end{split}\right . 
	\end{equation}
	where $\psi(x)$ is the $C^1$ function given in the assumptions of Theorem \ref{thm:MT:C}. 
	\end{Def}
  Thus combining Lemma \ref{lm:continous} with remark \ref{rk:mst} $\tilde{u}_\mu(x)$ is continous with respect to $\mu\in [0,+\infty)$.  Moreover
 \bl \label{lm:compare}Let $\tilde{u}_{\mu}(x)$ be the family of smooth functions given in Definition \ref{def:mu}. Then there is a $\mu_0>0$ such that $\tilde{u}_{\mu_0}(x)\geq \alpha_0$ on $\Omega$. 
 \el 
 \bp Let $\beta_0$ be the constant given in remark \ref{rk:mst}. Let $v_\mu(x)$ be the solution to the following problem  
  \begin{equation}
 \left\{\begin{split}
 div(\frac{Du}{\omega}) &=n\frac{\beta_0}{\omega} \quad x\in \Omega \quad \omega =\sqrt{1+|Du|^2}\\
 u(x)&=\psi(x)+\mu\quad x\in \partial\Omega
 \end{split}\right . 
 \end{equation}
 By the maximum principle and the definition of $\beta_0$, we have $v_\mu(x)\leq \tilde{u}_\mu(x)$ for each $x\in\bar{\Omega}$ and each $\mu>0$.  The conclusion follows from $v_\mu(x)=v_0(x)+\mu$. 
 \ep 
 Now we are ready to conclude Theorem \ref{thm:MT:C} from Corollary \ref{de:cor}.
 \bp [The proof of Theorem \ref{thm:MT:C}] According to Corollary \ref{de:cor},  $\tka\subset \bar{D}_{0,\alpha_0}$. For each $\mu\in [0,\mu_0]$ define a function $\kappa_\mu(x)=\min \{\alpha_0,\tilde{u}_\mu(x)\}$. By lemma \ref{lm:compare} at $\mu_0$ we have $\kappa_{\mu_0}(x)=\alpha_0$. \\
 \indent  Next we consider the value of $\mu$ when the graph of $k_\mu(x)$ firstly touches $T_{k,\alpha}$ given by 
 \be 
 a:=\inf\{s\in (0, \mu_0]: T_{k,\alpha} \cap gra(\kappa_t(x))=\emptyset\text{ for all $t\in [s,\mu_0]$ } \}
 \ene
 where $gra(\kappa_s)(x)$ is the graph of $\kappa_s(x)$.  \\
 \indent By corollary \ref{de:cor}, $T_{k,\alpha}$ is disjoint with the upper boundary of $\P D_{0,\alpha_0}$, i.e. $\{(x,\alpha_0):x\in\P\Omega\}$. By the continuity of $\kappa_{\mu}(x)$ we conclude $a<\mu_0$.  \\
 \indent Suppose $a>0$.  Then $T_{k,\alpha}$  has to touch $gra(\kappa_a(x)$ at some point $(x_0,t_0)$ with $t_0<\alpha_0$.  This implies that in a small neighborhood of $(x_0,t_0)$,  $\kappa_a(x)=\tilde{u}_a(x)$. By \eqref{def:tps} and Definition \ref{def:mu}  $gra(\kappa_a(x))$ near $p$ is minimal in $Q_\phi$. Combining \eqref{de} and Theorem \ref{max:thm:one} together, $T_{k,\alpha }$ coincides with $gra(\kappa_a)$ near $p$.  Due to the connectedness of $gra(\kappa_a)$, then $T_{k,\alpha}$ has to touch some point on $\{(x,\alpha_0):x\in\Omega\}$. This is a contradiction. Then $a=0$.\\
 \indent By \eqref{def:rstq} and $\tps'(r)>0$ for all $r\in \R$, $\tilde{u}_0(x)\leq \max_{\Omega}\psi(x)=\alpha_0$. As a result $\kappa_0(x)=\tilde{u}_0(x)$.  By \eqref{def:tps} and \eqref{def:rstq}, $\tilde{u}_0(x)$ solves the Dirichlet problem 
  \begin{equation}
 \left\{\begin{split}
 div(\frac{Du}{\omega}) &=\frac{n\phi'(u(x))}{\phi(u(x))\omega} \quad x\in \Omega \\
 u(x)&=\psi(x)\quad x\in \partial\Omega
 \end{split}\right . 
 \end{equation}
 This is the Dirichlet problem  \eqref{main:equation:A} with boundary data $\psi(x)$. By the condition (cB) the above equation has a unique solution. Together with Definition \ref{def:uk}, we conclude that $\tilde{u}_0(x)=u_0(x)$.  On the other hand, since $a=0$, 
  $T_{k,\alpha}$ is contained in a domain 
 \be 
  \{(x,t): u_0(x) \leq t \leq \kappa_0(x)=\tilde{u}_0(x)\}
 \ene 
 Thus $T_{k,\alpha}(x)$ is just the graph of $u_0(x)$ over $\bar{\Omega}$.\\
 \indent  Let $T_0$ be the graph of $u_0(x)$. Now let $k\rightarrow +\infty$ and $\alpha\rightarrow A$. Note that for any {\IMC} $T\in \fG$ and $\P T=\Gamma$ there is some $k>0,\alpha\in (0, A)$ such that $spt(T)\subset \bar{D}_{k,\alpha}$. Thus 
 $\bM(T_0)=\bM(T_{k,\alpha})\leq \bM(T)$.  Thus $T_0$ realizes the minimum of 
 $$
 \{\bM(T): T\in \fG, \P T=\Gamma\} 
 $$
 The proof is complete. 
  \ep

\section{The translating conformal cone}\label{section:translating}
In the previous section we require that the conformal cone is mean convex.  In this section we remove this condition in the case of translating conformal cones and consider the area minimizing problem (see Theorem \ref{final:conclusion:thm}). Our main tool is the BV function theory from \cite{Giu84}, \cite{Lin85} and \cite{Zhou19}.  We extend Lin's result \cite{Lin85} in clyinders of product manifolds into translating conformal cones. 
\subsection{Area functional} Let $\alpha>0$ be a fixed constant. Throughout this section let $\Omega$ be a $C^2$ bounded domain in a $n$-dimensional manifold $N$ with a metric $\sigma$. From Definition \ref{def:conformal:cone}, we single out the following concept. 
\begin{Def}  The translating conformal cone is defined by 
$$
Q_\alpha:=\{\Omega\PLH\R,  e^{2\alpha\F{r}{n}}(\sigma+dr^2)\}
$$

\end{Def}

\indent We use a similar setting as in the previous section. Let $\psi(x)$ be a $C^1$ function on $\P\Omega$ and $\Gamma=(x,\psi(x))$.  Define $\fG$ as the set of all {\IMC}s  in $\bar{Q}_\alpha$ of which support is contained in some set $\bar{\Omega}\PLH [a,b]$ with two finite constants $a<b$. The area minimizing problem in this section is still to find an {\IMC} $T_0\in \fG$ to realize 
\be\label{am:problem}
\min \{\bM(T): T\in\fG,    \P T=\Gamma\}
\ene 
\br The area minizing problem in the case of $\alpha=0$, i.e.  the product manifold, was investigated in \cite{LL85} and \cite{Lin85}.
\er 
\indent Now we consider a conformal functional on BV functions. For preliminary facts on BV functions, see section 3. \begin{Def}  \label{Def:conformal:functional}Suppose $u\in BV(\Omega)$ and $\alpha>0$.  Define
	\be 
	\begin{split}
		\AF_\alpha(u,\Omega):=\sup\{&\int_\Omega  e^{\alpha u(x)}(h+\F{1}{\alpha}div(X))dvol:\\
		&h\in C_0(\Omega), X\in T_0\Omega,  h^2+\la X,X\ra\leq1
		\}
	\end{split}
	\ene 
	where $\la,\ra$, div and $d vol$ are the inner product, the divergence and the volume form of $\Omega$ respectively and $C_0(\Omega)$ $(T_0(\Omega))$ denotes the set of all continuous functions ($C^1$ differential vector fields ) with compact support in $\Omega$. 
\end{Def}
\br  If $u\in C^1(\Omega)$, then $\AF_\alpha(\Omega,u)=\int_\Omega e^{\alpha u(x)}\sqrt{1+|Du|^2}d vol$ and is equal to the area of the graph of $u(x)$ in $Q_\alpha$. It was firstly studied in \cite{Zhou19}. 
\er 
\begin{Def}
For any function $u(x)$ on $N$ the set $U=\{(x,t):x\in \Omega, t<u(x)\}$ is called as the subgraph of $u(x)$ in $\Omega\PLH\R$.
\end{Def}
From now on assume that $\Omega$ is contained in a larger $C^2$ domain $\Omega^*$.  Consider the following minimizing problem 
\be \label{BV:minimum}
\min\{ \AF_\alpha(u,\Omega^*): u\in BV(\Omega^*)\text{ bounded, }  u=\psi(x)\text{ outside }\Omega\}
\ene
The relationship between the above problem and \eqref{am:problem} is stated as follows. 
\bt\label{central:fact}  Let $\Omega$, $\Omega^*$ be domains given as above. Suppose $\psi(x)\in C^1(\Omega^*\backslash \Omega)$. Let $u(x)\in BV(\Omega^*)$ be a bounded function with $u=\psi(x)$ outside $\Omega$.  Define $
  T:=\P [[U]]|_{\bar{\Omega}\PLH\R}$
  where $U$ is the subgraph of $u(x)$. 
Then $T$ is the solution to the problem in \eqref{am:problem} if and only if $u(x)$ is the solution to the problem in \eqref{BV:minimum}. 
\et 
\br The case of $\alpha=0$ of this theorem was firstly observed by Lau-Lin \cite{LL85}. See also the introduction of \cite{Bourni11}.  \\
\indent Let $\tu(x)$ be the trace of $u(x)$ on $\P\Omega$ from $\Omega$. For the definition of the trace see remark 2.5 in \cite{Giu84}. It may not be equal to $\psi(x)$ in general. Let $T$ be $gra(u(x))+S$  where $S$ is the $n$-dimensional {\IMC} in $\P \Omega\PLH\R$ enclosed by $\psi(x)$ and $\tu(x)$ such that  $\P S=\Gamma-gra(\tu(x))$. Moreover $\P T=\Gamma$. The mass of $T$ restricted in $\bar{Q}_\alpha$(the set $\bar{\Omega}\PLH\R)$ is 
\be \label{eq:BV:minimum}
\bM(T)=\AF_\alpha(u,\Omega)+\int_{\P \Omega}\F{1}{\alpha}|e^{\alpha \tu(x)}-e^{\alpha \psi(x)}|dvol
\ene 
\er 
\subsection{Some preliminary facts} Now we will collect some preliminiary facts for the proof of Theorem \ref{central:fact}.  In \cite{Zhou19} we show many results on the connection between BV functions and the conformal functional $\AF_\alpha(u,\Omega)$. \\
\indent In this subsection suppose $W$ is a fixed open set in $\Omega$. 
\bt [Theorem 5.5 in \cite{Zhou19}]  \label{thm:st:uv}Let $u(x)\in BV(W)$ and $U$ be its subgraph. Then it holds that 
\be 
||D\lambda_U||_{Q_\alpha}(W\PLH\R)=\AF_\alpha(u,W)
\ene 
for any $\alpha\geq 0$.\label{perimeter}
\et 
The following results are easily obtained from the definition of $\AF_\alpha(u,\Omega)$. 
\bl  [Corollary 5.7 in \cite{Zhou19}]\label{det:pro} Use the notation in Definition \ref{Def:conformal:functional}.  It holds that 
\begin{enumerate} 
	\item  Suppose $|u|\leq k$ on $W$, $\AF_\alpha(u,W)\geq e^{-\alpha k }(||Du||_N(W)+vol(W))$;
	\item  Suppose $\{u_j\}_{j=1}^\infty\in BV(W)$ are uniformly bounded and converges to
	$u$ in $L^1(W)$, then  $u\in BV(W)$ and  $$\AF_{\alpha}(u,W)\leq \lim_{j\rightarrow\infty} \inf \AF_\alpha(u_j,W)$$
\end{enumerate}
\el
The following result is a minor modification of Lemma 5.8 in \cite{Zhou19}. 
\bt \label{thm:est}
Let $u_1(x)<u_2(x)$ be two bounded BV functions on $W$.  Let $F$ be any {\Ca} set satisfying $\P F\subset W\PLH (u_1(x),u_2(x))$.  Then there is a function $w(x)\in BV (W)$ such that 
\be\label{dt:con:thm}
||D\lambda_F||_{Q_\alpha}(W\PLH\R)\geq \AF_\alpha(w(x),W)
\ene 
with the property that $u_1(x)\leq \omega(x)\leq u_2(x)$. Here $\alpha >0$. 
\et 
\br\label{reason:remark}  This theorem is the reason that here we only consider translating conformal cones.  For general conformal cones it is unknown whether a similar result as above holds. 
\er 
\bp  By Lemma 5.8 in \cite{Zhou19}, there is a function $\omega(x)\in BV(W)$ such that \eqref{dt:con:thm} holds. Moreover $\omega(x)$ is
defined by 
\be \label{rt:st}
e^{\alpha \omega(x)}=\alpha\lim_{k\rightarrow+\infty}\int_{-k}^k e^{\alpha t}\lambda_F(x,t)dt 
\ene  
Let $U_1$ and $U_2$ be the subgraph of $u_1(x)$ and $u_2(x)$ respectively. Because $\P F\subset W\PLH (u_1(x), u_2(x))$, then $\lambda_{U_1}(x,t)\leq \lambda_F(x,t)\leq \lambda_{U_2}(x,t)$. Combining this with \eqref{rt:st}, we obtain $e^{\alpha u_1(x)}\leq e^{\alpha \omega(x)}\leq  e^{\alpha u_2(x)}$. The positivity of $\alpha$ implies that $u_1(x)\leq \omega(x)\leq u_2(x)$. 
\ep 
The Miranda observation says that if $u(x)$ is a local minimum of $\AF_\alpha(.,W)$ then its subgraph has local least perimeter. A precise statement is given as follows. 
\bt [The Miranda's observation]\label{obs:thm} Let $u_1(x)$ and $u_2(x)$ be two measurable functions which may take possible infinity values.  Let $\tilde{W}$ be an open set taking the form $W \PLH (u_1(x), u_2(x))$ in $W\PLH \R$. Suppose $u(x)\in BV(W)$ with the following property: for any $v(x)\in BV(W)$ with its subgraph $V$ satisfying $V\Delta U\subset K$ for some compact set $K$ in $\tilde{W}$, it holds that
\be\label{eq:condition:A}
\AF_\alpha(u(x),W)\leq \AF_\alpha(v(x),W) 
\ene 
 Then its subgraph $U$ satisfies that 
 \be \label{conclusion:C}
 ||D\lambda_U||_{Q_\alpha}(\tilde{W})\leq ||D\lambda_F||_{Q_\alpha}(\tilde{W})
 \ene 
 for any {\Ca} set $F$ satisfying $F\Delta U$ containing some compact set in $\tilde{W}$.  The same conclusion holds if replace $\tilde{W}$ with its closure. 
\et

\bp  Fix any compact set $K$ in $\tilde{W}$.  Let $F$ be any {\Ca} set satisfying $U\Delta F\subset K$. There is a constant $\Sc>0$ such that $K\subset W\PLH (u_1(x)+\Sc, u_2(x)-\Sc)$. By Theorem \ref{thm:est}, there is a $\omega(x)$ such that 
\be 
\AF_\alpha(\omega(x), W)\leq ||D\lambda_F||_{Q_\alpha}(W\PLH\R)
\ene 
where $u_1(x)+\Sc\leq \omega(x)\leq u_2(x)-\Sc$. From the definition of $\omega(x)$ in \eqref{rt:st}, 
the subgraph of $\omega(x)$, $V$, satisfies $V\Delta U\subset K'$ where $K'$ is compact dertermined by $K$ and $\Sc$.  By Theorem \ref{perimeter} and \eqref{eq:condition:A}
\begin{align*}
||D\lambda_U||_{Q_\alpha}(W\PLH\R)&\leq \AF_\alpha(\omega(x),W)\\
&\leq ||D\lambda_F||_{Q_\alpha}(W\PLH\R)
\end{align*}
\indent Since $K$ is compact in $\tilde{W}$, we obtain the conlusion \eqref{conclusion:C}. As for the closure case, everything proceeds exactly except $u_1(x)<\omega(x)<u_2(x)$.  We can use $u_1(x)\leq \omega\leq u_2(x)$.  The proof is complete. 
\ep 
A BV function to achieve a local minimum of $\AF_\alpha(u,\Omega)$ in open set has some good regulairty.  An equivalent statement of Theorem 8.3 in \cite{Zhou19} says that 
\bt \label{regularity:theorem} Let $W$ be an open set.  Suppose $u(x)$  uniformly bounded satisfies 
$
 \AF_\alpha(u, W)\leq \AF_\alpha(u,v(x))
$
for any $V$ with the property that $U\Delta V$ is contained in a compact set in $\bar{\Omega}\PLH\R$ where $U,V$ are subgraphs of $u(x)$ and $v(x)$ respectively. Then $u(x)\in C^2(W)$. 
\et 

\subsection{The proof of Theorem \ref{central:fact}} 
\bp ``$\Rightarrow$''  Suppose $T=\P [[U]]|_{\bar{\Omega}\PLH\R}$ is the solution to the problem in \eqref{am:problem}. Here $U$ is the subgraph of $u(x)$. Fix any bounded function $v\in BV(\Omega^*)$ satisfying $v(x)=\psi(x)$ outside $\Omega$. \\
\indent Let $V$ be its subgraph.  Consider $T'=\P [[V]]|_{\bar{\Omega}\PLH\R}$. Then $\P T'=\Gamma$ and $spt(T')\subset \subset \bar{\Omega}\PLH\R$. By \eqref{am:problem}, we have 
\be 
\bM(T)\leq \bM (T')
\ene 
Since $U=V$ outside $\bar{\Omega}\PLH\R$, we obtain that 
\be 
\bM(\P [[U]])\leq \bM (\P [[V]])
\ene
Here the support of $\P [[U]]$ and $\P [[V]]$ are located in $\Omega^*\PLH\R$. 
By \eqref{Mass_and_min}, the above inequality is equivalent to 
\be 
||D\lambda_U||(\Omega^*\PLH\R)\leq  ||D\lambda_V||(\Omega^*\PLH\R)
\ene 
By Theorem \ref{thm:st:uv} one obtains that 
\be \label{key:les:st}
  \AF_\alpha(u,\Omega^*)\leq \AF_\alpha(v,\Omega^*)
\ene
From the way to choose $v(x)$ $u(x)$ is the solution to the problem in \eqref{BV:minimum}. \\
\indent ``$\Leftarrow$"  Suppose $u(x)\in BV(\Omega^*)$ is bounded as the solution to the problem in \eqref{BV:minimum}. Let $T=\P [[U]]|_{\bar{\Omega}\PLH\R} $ where $U$ is the subgraph of $u(x)$.  Fix any $T^*$ satisfing $spt(T^*)\subset \subset \bar{\Omega}\PLH\R$ with $\P T^*=\Gamma$. We can assume $spt(T^*)$ and $spt(T)$ are contained in $ \bar{\Omega}\PLH (-a,a)$ for sufficiently large $a$. Let $T_a$ be an {\IMC} to achieve
\be
\min \{\bM(T): spt T\subset \subset \bar{\Omega}\PLH [-a,a]  \quad  \P T=\Gamma\}
\ene 
By Lemma \ref{eq:lm:st} and Remark \ref{remark:note}, there is a {\Ca} set $F$ such that $T_a=\P [[F]]|_{\bar{\Omega}\PLH\R}$.  Thus 
\be 
   ||D\lambda_F||(\bar{\Omega}\PLH (-a,a))=\bM(T_a)\leq \bM(T^*)
   \ene 
  Now define a new {\Ca} set $F'$ such that $F'$ coicides with $U$ outside $\Omega^*\backslash \bar{\Omega}\PLH\R$ and $F'$ coincides $F$ in $\bar{\Omega}\PLH\R$.  By Theorem \ref{thm:est}, there is $v(x)\in BV(\Omega^*)$ such that 
$$ 
 \AF_\alpha(v(x),\Omega^*) \leq ||D\lambda_{F'}||(\Omega^*\PLH\R)=\bM(\P [[F']])
$$
with $v(x)=\psi(x)$ outside $\Omega$. Thus by \eqref{BV:minimum}
\be 
\bM(\P [[U]])=\AF_\alpha(u,\Omega^*)\leq \bM(\P [[F']])
\ene
Because $U$ concides with $F'$ outside $\bar{\Omega}\PLH\R$, we have 
\be
M(T)\leq  \bM(T_a)\leq \bM(T^*)
\ene 
Thus $T$ is the solution to the problem in \eqref{am:problem}. 
\ep 
Now a direct application of Theorem \ref{central:fact} and Theorem \ref{thm:MT:C} is concluded as follows.  
\bt\label{minim:thm}Fix $\Omega \subset \subset \Omega^*$ be two bounded $C^2$ domains. Suppose 
\begin{enumerate}
	\item $\Omega$ is a $C^2$ mean convex, bounded domain with the NCM property. 
	\item $\psi(x) \in C^1(\Omega^*\backslash \Omega)$.  
\end{enumerate}
	Let $u_\infty(x)$ be the solution of 
	\be 
	\left\{\begin{split}
		div(\frac{Du}{\omega}) &=\frac{\alpha}{\omega} \quad x\in \Omega\quad \omega =\sqrt{1+|Du|^2}\\
		u(x)&=\psi(x)\quad x\in \partial\Omega
	\end{split}\right . 
	\ene 
Let $u_\infty(x)=\psi(x)$ outside $\Omega$. Then $u_\infty(x)$ achieves the minimum in 
 \be 
 \min\{ \AF_\alpha(u,\Omega^*): u\in BV(\Omega^*)\text{ bounded},  u=\psi(x) \text{ outside }  \Omega\}
 \ene 
\et  
\subsection{The main result}
Now we state the main result of this section.  
\bt \label{final:conclusion:thm} Let $\Omega\subset\subset \Omega^*$ be two $C^2$ bounded domains. Suppose $\Omega^*$ has the $C^2$ mean convex boundary with the NCM property and $\psi(x)\in C^1(\Omega^*\backslash \Omega)$. Let $\Gamma=(x,\psi(x))$. Then  \begin{enumerate}
	\item there is a bounded BV function $u_\infty(x)$ with $u_\infty(x)=\psi(x)$ outside $\Omega$ to achieve the minimum of 
	\be 
	 \min\{ \AF_\alpha(u,\Omega^*): u\in BV(\Omega^*),  \text{bounded}, u=\psi(x)\text{ outside }\Omega\}
	\ene 
	\item Let $U_\infty$ be the subgraph of $u_\infty(x)$. Then $T_\infty=\P [[U_\infty]]|_{\bar{\Omega}\PLH\R}$  is an {\IMC} to  solve the problem in \eqref{am:problem}，i.e.
	\be
	\bM(T_\infty)=\min \{\bM(T): T\in \fG,  \quad  \P T=\Gamma\}	
	\ene
	\item $u_\infty(x)$ is $C^2$ on $\Omega$. 
\end{enumerate}
\et 
\bp 
By Theorem \ref{central:fact} the conclusion (1) is equivalent to the conclusion (2). The conclusion (3) is from Theorem \ref{regularity:theorem}.  Thus we only need to prove the conclusion (1). \\
\indent By Theorem \ref{tsh:thm} let $v(x)$ be the solution to the following equation 
\be \label{min:eq}
\left\{\begin{split}
	div(\frac{Du}{\omega}) &=\frac{\alpha}{\omega} \quad x\in \Omega^*\quad \omega =\sqrt{1+|Du|^2}\\
	u(x)&=1\quad x\in \partial\Omega^*
\end{split}\right . 
\ene 
Because $\psi(x)$ is $C^1$ on $\P\Omega$, there is a large positive integer $k$ such that $$\max_{x\in \P\Omega}|\psi(x)|\leq k$$
Let $\mu_0=\max\{k+1, \max_{\bar{\Omega}*} (v(x)-k)\}$.  Fix any $\mu>\mu_0$. Now we consider the following auxiliary problem 
\be\label{au:BV:minimum}
\begin{split}
A_\mu:=\{ \AF_\alpha(u,&\Omega^*): u\in BV(\Omega^*),
v(x)-\mu\leq u\leq v(x)+\mu \text{ on }  \Omega  \\
&u=\psi(x)\text{ on }\Omega^*\backslash\Omega\}
\end{split}
\ene 
By the definition of $\mu_0$, the above definition is well-defined. Note that on $\P\Omega$, $v(x)-\mu \leq \psi(x)\leq v(x)+\mu$.  Suppose $\{u_j\}$ is a sequence of BV functions satisfying \eqref{au:BV:minimum} and $\lim_{j\rightarrow +\infty} \AF_\alpha(u_j,\Omega)=A_\mu $.  Moreover $|u_j|\leq C$ for all $j$ where $C$ is a positive constant depending on $\mu$ and $\psi(x)$.  By (1) in  Lemma \ref{det:pro}, we have 
\be 
\{||Du_j||_N(\Omega^*)+vol(\Omega^*) \}
\ene 
is uniformly bounded.  By the compactness of BV functions on Lipschitz domains, there is $u_\mu(x)\in BV(\Omega^*)$ such that 
$\lim_{j\rightarrow +\infty}u_j(x)=u_\mu(x)$ in $L^1(\Omega)$. Hence $v(x)-\mu\leq u_\mu(x)\leq v(x)+\mu$ and 
$u_\mu(x)=\psi(x)\text{ outside }\Omega$.  \\
\indent By (2) in Lemma \ref{det:pro}  we have $$A_\mu\leq \AF_\alpha(u_\mu,\Omega^*)\leq \lim_{j\rightarrow +\infty} \AF_\alpha(u_j,\Omega^*)=A_\mu$$
Thus $\AF_\alpha(u_\mu(x),\Omega^*)=A_\mu$.\\
\indent  Let $U_\mu$ be the subgraph of $u_\mu(x)$.  By Theorem \ref{obs:thm}, $U_\mu$ is a minimal set in the closed set 
$$
\{(x,t): x\in \bar{\Omega}, t\in[v(x)-\mu, v(x)+\mu]\}
$$
\indent Set $\Sigma:=\{(x,v(x)+\mu):x\in\Omega\}$. By \eqref{min:eq} $\Sigma$ is minimal. We claim that $\P U_\mu$ can not touch $\Sigma$.\\
\indent  Suppose not. Let $p$ be the common point of $\Sigma$ and $\P U_\mu$. By Theorem \ref{regularity:key:thm}  $\P U_\mu$ is a $C^{1,\alpha}$ graph near $p$.  As a result 
\be  \label{des}
H_{\P U_\mu}\leq 0
\ene 
near $p$ with respect to the outward normal vector of $\P U_\mu$ in the Lipschitz sense.  By Theorem \ref{max:thm:one} and \eqref{des} $\Sigma$ coincides with $\P U_\mu$ near $p$. Define $\Gamma_0=\{(x,v(x)+\mu):x\in\P\Omega\}$. The connectedness implies that $\Gamma_0\subset \P U_\mu$. Note that $\Gamma_0$ is disjoint with $\Gamma$. Again by Theorem \ref{regularity:key:thm} $\P U_\mu$ is a $C^{1,\alpha}$ on $\Gamma_0$.  This means that the tangent space of $\P U_\mu$ is equal to those of $\Sigma$ at $\Gamma_0$. Thus there are tangent vectors of $\P U_\mu$ on each point of $\Gamma$ pointing outward to $\Omega\PLH\R$. This is impossible since $\P U_\mu$ is contained in $\bar{\Omega}\PLH\R$. \\
\indent Consequently $\P U_\mu$ can not touch the boundary $\Sigma:=\{(x,v(x)+\mu):x\in\Omega\}$. With a similar derivation $\P U_\mu$ is also disjoint with the set $\{(x,v(x)-\mu):x\in\Omega\}$. \\
\indent Note that all above derivation is true for any $\mu>\mu_0$. Arguing similarly as in Corollary \ref{de:cor}, by induction we see that $\P U_\mu|_{\Omega\PLH\R}$ is contained in 
         \be \label{eq:key:fact}
         \{(x,t): x\in \bar{\Omega}, t\in[v(x)-\mu_0, v(x)+\mu_0]\}
         \ene 
   This yields $v(x)-\mu_0 \leq u_\mu(x)\leq v(x)+\mu_0$ on $\Omega$ for any $\mu>\mu_0$. Letting $\mu \rightarrow +\infty$ yields that $\{A_\mu\}$ converges to a constant $A$ defined by 
   $$
 A:= \min\{ \AF_\alpha(u,\Omega^*): u\in BV(\Omega^*),  \text{bounded}, u=\psi(x)\text{ outside }\Omega\}
   $$
    By (1) in  Lemma \ref{det:pro} and \eqref{eq:key:fact} we have the estimate 
   \be 
   \{||Du_\mu||_N(\Omega^*)+vol(\Omega^*) \}\leq C
   \ene 
   where $C$ only depends on $\mu_0, v(x)$ and $A$. By the compactness of BV functions,  there is $u_\infty(x)\in BV(\Omega^*)$ such that $\{u_\mu(x)\}$ (possibly a subsequence) converges to $u_\infty(x)$ in $L^1(\Omega)$ as $\mu$ goes to $+\infty$. Thus $u_\infty(x)=\psi(x)\text{ outside }\Omega$. Moreover Lemma \ref{det:pro} gives 
   \be 
   A\leq \AF_\alpha(u_\infty,\Omega^*)\leq \lim_{\mu\rightarrow+\infty} \AF_\alpha(u_\mu,\Omega^*)=A
   \ene 
   Thus $u_\infty(x)$ is the desired bounded BV function for the conclusion (1). The proof is complete. 
\ep  
\section{The NCM property is necessary}\label{morgan}
In this section we give examples to show the NCM property is necessary to obtain the conclusions in Theorem  \ref{thm:A:B} (Theorem \ref{thm:one:two}) and Theorem \ref{thm:MT:C} (Theorem \ref{thm:two:two}).\\
\indent Throughout this section let $S^n$ and $S^n_+$ be the $n$-dimensional sphere and open hemisphere respectively with the standard metric $\sigma_n$. Note that $\P S^n_+$ is a $(n-1)$ dimensional unit sphere and is minimal in $S^n$. Thus by Definition \ref{Def:NCM} $S^n_+$ does not have the NCM property. 
\bt \label{thm:one:two}For any $\alpha\geq n$ there is no solution in $C^2(S^n_+)\cap C(\bar{S^n_+})$ to the Dirichlet problem
\begin{equation}\label{uv:st}
\left\{\begin{split}
div(\frac{Du}{\omega}) &=\F{\alpha}{\omega}\quad x\in \Omega \quad \omega =\sqrt{1+|Du|^2}\\
u(x)&=\psi(x)\quad x\in \partial\Omega
\end{split}\right.  
\end{equation}
for any $\psi(x)\in C(\P\Omega)$. 
\et  
\br  The case of $\alpha=n$ is obtained in appendix C in \cite{Zhou19}. 
\er
\begin{Def} Define  
	\be 
	R^{\alpha+1}_+ :=\{S^n_+\PLH\R, e^{2r\F{\alpha}{n}}(\sigma_n+dr^2) \}
	\ene
\end{Def}
When $\alpha =n$, $R^{\alpha+1}_+$ is isometric to the half upper space in $\R^{n+1}$ via the map
\be 
F: R^{n+1}_+\rightarrow  (x_1,x_2,\cdots, x_{n+1})\subset\R^{n+1}
\ene 
with $F(\theta,\rho)=(x_1,x_2,\cdots,x_{n+1})$ for $x_{n+1}>0$. Here $(\theta,\rho)$ is the polar coordinate of $(x_1,x_2,\cdots, x_{n+1})$ in $\R^{n+1}$. For any $t>0$, consider a family of hypersurfaces $\Sigma_t$ in $R^\alpha_+$ given by 
\be\label{Def:sm}
\Sigma_t:=F^{-1}\{(x_1,x_2,\cdot, x_n,t):(x_1,x_2,\cdot, x_n)\in\R^n\}
\ene 
The following property of $\Sigma_t$ is easily obtained. 
\bl \label{lm:evidence}
For any $t>0$, $\Sigma_{t}$ is unbounded in $R_+^{\alpha+1}$, i.e. there is no bounded inverval $[a,b]$ such that $\Sigma_{t}\subset S_+^n\PLH [a,b]$.
\el 
Now taking the outward normal vector of $\vec{v}_t$ pointing to the positive infinity in $\R^n_+$, we have $\la \vec{v}_t,\P_r\ra >0$ on $\Sigma_t$. Let $H_\alpha$ be the mean curvature of $\Sigma_t$ in $R^\alpha_+$.  Thus $H_n=0$ on $\Sigma_t$. Note that the metric of $\R^{\alpha+1}_+$ can be written as 
$$
e^{2r\F{\alpha-n}{n}}e^{2r}(\sigma_n+dr^2)
$$
By Lemma \ref{lm:mean:curvature} and $H_n=0$,  we have 
$$
H_\alpha =e^{-\F{\alpha-n}{n}r}(\F{\alpha-n}{n}\la v_t,\P_r\ra )
$$
This gives that 
\bl Suppose $\alpha\geq n$.  Then $H_\alpha\geq 0$ on $\Sigma_t$ for each $t>0$ in $\R^{\alpha+1}_+$. 
\el 
Now we are ready to show Theorem \ref{thm:one:two}. 
\bp Now assume there is a $C^2$ solution $u(x)$ to \eqref{uv:st}.  Let $S$ be the graph of $u(x)$ in $\R^\alpha_+$. Note that $\{\Sigma_t\}_{t>0}$ gives a smooth foliation for $\R^\alpha_+$. Then we  set 
   \be 
   t_0=\sup\{t>0: \Sigma_t \cap S\neq \emptyset \}
   \ene 
   Since $F(S)$ is a bounded set in $\R^n_+$. Thus $t_0$ is a finite positive number. Thus $\Sigma_{t_0}$ is tangent to $S$ at some point.  Note that $H_\alpha \geq 0$ on $\Sigma_{t_0}$ with the normal vector pointing to the positive infinity and $S$ is minimal.  By Theorem \ref{max:thm:one} and the connectedness of $\Sigma_{t}$, $ \Sigma_{t}\subset S$.  It contradicts to Lemma \ref{lm:evidence}.  The proof is complete. \ep 
   Let $\psi(x)\in C^1(\P S^n_+)$ and $\Gamma=\{(x,\psi(x)):x\in\P S^n_+\}$. Let $U_\psi$ be the subgraph of $\psi(x)$ in $\P\Omega\PLH\R$, i.e. $\{(x,t):x\in\P\Omega, t<\psi(x)\}$. Define $\fG_\alpha$ as the set of all {\IMC} with compact support in $\bar{R}^{\alpha+1}_+$, i.e. $\bar{S}^n_+\PLH\R$. Here a closed set is compact if it is contained in some set $\bar{S}^n\PLH [a,b]$ where $a<b$ are two finite constants. The area minimizing problem in $R^{n+1}_+$  is to find an {\IMC} $T_0\in \fG_\alpha$ to realize the minimum of 
   \be\label{fin:am:problem}
   \min \{ \bM(T): T\in \fG_\alpha, \P T=\Gamma\}
   \ene 
  \bt\label{thm:two:two}  For any $\alpha\geq n$, no $T\in \fG_\alpha$ can realize the minimum in \eqref{fin:am:problem} in $\R^{\alpha+1}_{+}$. 
  \et 
  \bp Suppose there is a $T_0\in \fG_\alpha$ to realize the minimum in \eqref{fin:am:problem}. Because $T_0$ has a compact support,  we can asssume $T_0\subset \bar{S}^n_+\PLH (a,b)$ for two finite constants $a<b$ and $T_0$ is disjoint with $\bar{S}^n_+\PLH\{a\}$ and $\bar{S}^n_+\PLH\{b\}$. \\
  \indent  By Lemma \ref{eq:lm:st} there is a {\Ca} set $F$ in $S^n\PLH\R$ such that $T_0=[[\P F]]|_{\bar{\Omega}\PLH\R}$. Note that $F$ is a minimal set in $\bar{\Omega}\PLH(a,b)$. \\
  \indent Let $\Sigma_t$ be the smooth  hypersurface in \eqref{Def:sm}. Since $\{\Sigma_t\}_{t>0}$ is a smooth foliation of $R^{\alpha+1}_+$. Define 
  \be 
     t_1=\sup\{t>0: \Sigma_t \cap T_0\neq \emptyset \}
 \ene 
 Since $T_0\in \fG_\alpha$, $t_1$ is a finite number. Thus $\Sigma_t$ is tangent to $T_0$ at some point $p$ in $S^n_+\PLH\R$.  Let $W$ be the domain of $\R^{\alpha+1}_+\backslash \Sigma_{t_0}$ such that $T_0\subset \bar{W}$.\\
 \indent  By Theorem \ref{thm:par:regularity},  then $T_0$ is a $C^{1,\beta}$ graph near $p$ for some $\beta>0$. Since $F$ is a minimal set near $p$, the mean curvature of $T_0$ is equal to $0$ in the Lipschitz sense. By the standard schauder estimate,  $T_0$ is smooth.  On the other hand $H_\alpha\geq 0$ on $\Sigma_{t_1}$ {\wrt} the normal vector pointing to the positive infinity.  By Theorem \ref{max:thm:one} and the connectedness of $\Sigma_{t_1}$, $ \Sigma_{t_1}\subset T_0$.  This yields a contradiction because $\Sigma_{t_1}$ is unbounded in $R_+^{\alpha+1}$ from Lemma \ref{lm:evidence}.  The proof is complete. 
  \ep
\section{Acknowledgement}
The first author was supported by the National Natural Science Foundation of China, Grant No. 11771456. 
The second author was supported by the National Natural Science Foundation of China, Grant No. 11801046, the Foundamental Research Funds for the Central Universities, China, Grant No. 2019 CDXYST0015. 

 \appendix 
\section{Some maximum principles}
In this section $M$ always denotes a smooth Riemannian manifold with a metric $g$ and $\Omega$ is an open $C^{1,\alpha}$ domain in $M$.  Recall that in Definition \ref{def:mc} $div(\vec{v})$ denotes the mean curvature of a smooth hypersurface in $M$ with respect to the normal vector $\vec{v}$. 
\bt\label{max:thm:one} Let $S\subset \bar{\Omega}$ be an orientable connected $C^{1,\alpha}$ hypersurface in $M$. Suppose 
	$H_{S}\leq 0$ and $H_{\P\Omega}\geq 0$ with respect to the outward normal vector of $\Omega$ in the Lipschitz sense. 
If $S$ is tangent to a point $p$ in $\P\Omega$, then $S$ coincides with $\P\Omega$ and is minimal in a neighborhood of $p$. 
\et
\bp Suppose $S$ is tangent to $\P\Omega$ at point $p$. We use the coordinate $(y, t)\in \R^{n-1}\PLH\R$ to denote a normal coordinate near $p=(0,0)$ such that the positive direction of $t$ at $p=(0,0)$ points into the $\Omega$. Thus near $p$ both $S$ and $\P\Omega$ are graphs over a neighborhood of the origin in $\R^{n-1}$. \\
\indent Fix any $C^{1,\alpha}$ surface $\Sigma$ passing through $p$.  In a neighborhood of $p$, $\Sigma$ can be represented as a graph over $W\subset \R^{n-1}$ containing 0.  By the area formula, its area near  can be written as 
\be \label{eq:A}
Area(\Sigma)=\int_{W}  F(y,u,Du)dy \ene 
where $\Sigma=(y,u(y))$ and $u\in C^{1,\alpha}$ and $F=F(y,t,p,t):W\PLH\R \PLH T\R^{n-1}\PLH\R$ is a $C^{\infty}$ map determined by the metric $g$.  \\
\indent With the notation above, there are two functions $u_1(y)$ and $u_2(y)$ such that $S$ and $\P\Omega$ are the graphs of $u_1(y)$ and $u_2(y)$ over $W$ respectively, $u_1(x)\geq u_2(y)$ and $u_1(0)=u_2(0)$. Note that $u_1\in C^{1,\alpha}(W)$ and $u_2(x)\in C^2(W)$. Because $H_S\leq 0$,  for any nonnegative function $\phi(y)\in C^1_0(W)$ we have 
\be \label{eq:B}
\int_{W}(\F{\P F}{\P p_i}(y,u_1, Du_1)\phi_{i}+\F{\P  F}{\P z}(y,u_1,Du_1)\phi )dy \geq 0 
\ene 
Since $H_{\P\Omega}\geq 0$,  then for $\phi(y)\geq 0$ in $C^1_0(W)$ it holds 
\be\label{eq:C}
\int_{W}(\F{\P F}{\P p_i}(y,u_2, Du_2)\phi_{i}+\F{\P  F}{\P z}(y,u_2,Du_2)\phi )dy \leq 0 
\ene
We can assume $Du_1, Du_2$ are uniformly bounded on $W$.  Consider $s(y)=u_1(y)-u_2(y)$. Then $s(y)\geq 0$, $s(0)=0$ with 
\be 
\int_{W}\{a^{ij}s_i\phi_i+b^i s_i\phi_i+cs\}\phi dy \geq 0 
\ene
for any $\phi(y)\geq 0$ in $C_0^1(W)$. Here $(a^{ij})$ is an uniformly positive definite matrix if we take $W$ small enough. By the weak Harnack inequality in Theorem 1.2, \cite{Tru67}, there is a $\gamma >0$ such that 
\be 
\rho^{-\F{n}{\gamma}} ||s||_{\gamma}(K(2\rho))\leq min_{K(\rho)}s=0
\ene 
where $K(\rho)$ is the Eucldiean cube centering at $0\in \R^{n-1}$ with a Euclidean radius $\rho$. Thus $u_1(y)\equiv u_2(y)$ near  a neighborhold of $0$. Thus $S$ coincides with $\P\Omega$ near $p$. Moreover the inequalities in \eqref{eq:B} and \eqref{eq:C} should be two equalities.  Then $\Sigma$ is a critial point of area functional.  This means that $S$ is minimal.  The proof is complete. 
\ep 

\bibliographystyle{abbrv}	
\bibliography{Ref}
 \end{document}